\documentclass[11pt]{article}
\usepackage{amsmath}
\usepackage{amsthm}
\usepackage{amssymb,amsfonts}
\usepackage{hyperref}
\usepackage{latexsym}
\usepackage{todonotes}
\usepackage{nicefrac}
\usepackage{epsfig}
\usepackage{stmaryrd}
\usepackage{setspace}
\usepackage{enumerate}
\usepackage[all]{xypic}
\usepackage{bbm,ifpdf,tikz}
\ifpdf
\usepackage{pdfsync}
\fi

\oddsidemargin=0pt
\evensidemargin=0pt
\topmargin=0in
\headheight=0pt
\headsep=0pt
\setlength{\textheight}{9in}
\setlength{\textwidth}{6.5in}

\newtheorem{theorem}{Theorem}[section]

\newtheorem{lemma}[theorem]{Lemma}
\newtheorem{proposition}[theorem]{Proposition}
\newtheorem{corollary}[theorem]{Corollary}

\newtheorem{conjecture}[theorem]{Conjecture}

{
\theoremstyle{definition}

\newtheorem{example}[theorem]{Example}

\newtheorem{remark}[theorem]{Remark}
}

\newcommand{\excise}[1]{}

\renewcommand{\and}{\qquad\text{and}\qquad}
\newcommand{\Ind}{\operatorname{Ind}}

\newcommand{\Hom}{\operatorname{Hom}}

\newcommand{\Q}{\mathbb{Q}}
\newcommand{\N}{\mathbb{N}}
\newcommand{\R}{\mathbb{R}}
\newcommand{\C}{\mathbb{C}}

\newcommand{\cA}{\mathcal{A}}
\newcommand{\la}{\lambda}

\newcommand{\cs}{\C^\times}

\newcommand{\Conf}{\operatorname{Conf}}
\newcommand{\FI}{\operatorname{FI}}
\newcommand{\FB}{\operatorname{FB}}
\renewcommand{\Vec}{\operatorname{Vec}}

\begin{document}
\spacing{1.2}
\noindent{\Large\bf Equivariant log concavity and representation stability}\\

\noindent{\bf Jacob P. Matherne}\\
Mathematical Institute, University of Bonn, 53115 Bonn, Germany\\
and Max Planck Institute for Mathematics, 53111 Bonn, Germany
\vspace{.1in}

\noindent{\bf Dane Miyata}\footnote{Supported by NSF grants DMS-1954050 and DMS-2039316.}\\
Department of Mathematics, University of Oregon, Eugene, OR 97403
\vspace{.1in}

\noindent{\bf Nicholas Proudfoot}\footnote{Supported by NSF grants DMS-1954050, DMS-2039316, and DMS-2053243.}\\
Department of Mathematics, University of Oregon, Eugene, OR 97403
\vspace{.1in}

\noindent{\bf Eric Ramos}\footnote{Supported by NSF grants DMS-1704811 and DMS-2137628.}\\
Department of Mathematics, Bowdoin College, Brunswick, ME 04011\\

{\small
\begin{quote}
\noindent {\em Abstract.}
We expand upon the notion of equivariant log concavity, and make equivariant log concavity conjectures
for Orlik--Solomon algebras of matroids, Cordovil algebras of oriented matroids, and Orlik--Terao algebras of hyperplane arrangements.
In the case of the Coxeter arrangement for the Lie algebra $\mathfrak{sl}_n$,
we exploit the theory of representation stability to give computer assisted proofs of these conjectures in low degree.
\end{quote} }

\section{Introduction}
For any positive integer $n$ and any topological space $X$, let $\Conf(n,X)$ be the space of ordered configurations
of $n$ distinct points in $X$.  This space is equipped with an action of the symmetric group $\mathfrak{S}_n$, which acts by permuting the points.
If $G$ is a group acting on $X$, then the action of $\mathfrak{S}_n$ descends to an action on $\Conf(n,X)/G$.

Our main objects of study will be the following finite dimensional graded representations of $\mathfrak{S}_n$:
\begin{itemize}
\item $A_n := H^*(\Conf(n,\C); \Q)$.  This is also known as the Orlik--Solomon algebra of the braid matroid.
\item $B_n := H^*(\Conf(n,\C)/\cs; \Q)$, where $\cs$ acts on $\C$ by multiplication.  
This is also known as the reduced Orlik--Solomon algebra of the braid matroid. 
\item $C_n := H^{2*}(\Conf(n,\R^3); \Q)$.\footnote{This cohomology ring vanishes in odd degree, so we set $C_n^i$ equal to the cohomology
in degree $2i$.  We do the same in the definitions of $D_n$ and $M_n$ below.}  
This is also known as the Cordovil algebra of the oriented braid matroid.
\item $D_n := H^{2*}(\Conf(n,SU_2)/SU_2; \Q)$, where $SU_2$ acts on itself by left translation.
\end{itemize}

\begin{remark}\label{identify}
Identifying $\R^3$ with the complement of the identity in $SU_2$ induces a homeomorphism
$\Conf(n-1,\R^3)\cong \Conf(n,SU_2)/SU_2$, which is equivariant with respect to the action of the group $\mathfrak{S}_{n-1}\subset \mathfrak{S}_n$.  
It follows that the restriction of $D_n$ to $\mathfrak{S}_{n-1}$ is isomorphic to $C_{n-1}$.
\end{remark}

\begin{remark}
For any $d\geq 2$, the cohomology of $\Conf(n,\R^d)$ vanishes in all degrees that are not multiples of $d-1$, and we have
$\mathfrak{S}_n$-equivariant algebra isomorphisms
$$H^{(d-1)*}(\Conf(n,\R^d); \Q)\cong \begin{cases}
A_n &\text{if $d$ is even}\\ C_n &\text{if $d$ is odd.}\end{cases}$$
Thus we would not gain anything new by considering configuration spaces in Euclidean spaces of higher dimension.
This is due originally to Cohen \cite{Cohen}; see \cite[Corollary 5.6]{dLS} for a more modern treatment.
\end{remark}

There is one more graded representation that we will consider, whose definition is more technical.
Let $X_n$ denote the complex affine hypertoric variety associated with the root system of the Lie algebra $\mathfrak{sl}_n$;
see \cite[Section 3.1]{MPY} for an explicit description.  The variety $X_n$ comes equipped with an action of the symmetric group $\mathfrak{S}_n$,
and we consider the induced action on intersection cohomology:
\begin{itemize}
\item $M_n := I\!H^{2*}(X_n; \Q)$.  This can also be described as the quotient of the Orlik--Terao algebra of the $\mathfrak{sl}_n$
Coxeter arrangement by its canonical linear system of parameters.
\end{itemize}

\begin{remark}
For a more concrete perspective, we give explicit presentations of the rings $A_n$, $B_n$,
$C_n$, $D_n$, and $M_n$ in the appendix.
\end{remark}

The following conjecture appeared in \cite[Conjecture 2.10]{MPY}. 

\begin{conjecture}\label{mpy}
For all $n$, there exists an isomorphism of graded $\mathfrak{S}_n$-representations $D_n\cong M_n$.
\end{conjecture}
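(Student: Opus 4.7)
The strategy is character-theoretic. Since rational representations of $\mathfrak{S}_n$ are determined up to isomorphism by their characters, it suffices to prove that $D_n$ and $M_n$ have identical graded Frobenius characteristics. I would therefore compute the two characters separately using known combinatorial and geometric tools, and then compare.

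For $M_n$, I would exploit that the intersection cohomology of an affine hypertoric variety admits an equivariant description via the Kazhdan--Lusztig theory of matroids: the $\mathfrak{S}_n$-equivariant graded Poincar\'e polynomial of $M_n$ should agree with the equivariant Kazhdan--Lusztig polynomial of the braid matroid, in the sense developed by Gedeon--Proudfoot--Young. For the braid matroid this invariant has known explicit formulas and recursions, which in principle yield the $\mathfrak{S}_n$-character of $M_n$ in each degree.

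For $D_n$, the starting point is Remark~\ref{identify}: as an $\mathfrak{S}_{n-1}$-representation, $D_n$ is the Cordovil algebra $C_{n-1}$, whose character is known from work of Sundaram--Welker and others. The genuinely new step is determining the action of an element of $\mathfrak{S}_n \setminus \mathfrak{S}_{n-1}$. I would attack this via the Fadell--Neuwirth fibration $\Conf(n, SU_2) \to SU_2$ (projection onto one factor), combined with the diagonal $SU_2$-quotient, using the resulting equivariant Leray spectral sequence to obtain a presentation of $H^*(\Conf(n, SU_2)/SU_2; \Q)$ in which the full $\mathfrak{S}_n$-action is visible; the explicit presentation of $D_n$ from the appendix should also make this action computable by hand.

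With both characters in hand, the comparison reduces to a combinatorial identity between the equivariant Kazhdan--Lusztig polynomial of the braid matroid and the $\mathfrak{S}_n$-character of $H^*(\Conf(n, SU_2)/SU_2; \Q)$. The main obstacle will be establishing this identity uniformly in $n$: one character is controlled by a comparatively explicit product formula, while the other is defined by a recursive procedure, so matching them will likely require either a bijective argument on no-broken-circuit bases or the construction of a direct geometric correspondence between $X_n$ and $\Conf(n, SU_2)/SU_2$. As a preliminary sanity check I would verify the identity for small $n$ using the character-computational techniques developed elsewhere in the present paper, and only then attempt a uniform proof; a natural refinement would be to upgrade the identity to an isomorphism of $\FI$-modules, which would sidestep the dependence on $n$ entirely.
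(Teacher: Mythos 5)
This statement is an open conjecture, and the paper does not prove it. What the paper actually proves is the far weaker Theorem~\ref{mpy-low}: that $D_n^i\cong M_n^i$ for $i\leq 7$ and all $n$, via representation stability. The authors show that the $\FB$-modules $D^i$ and $M^i$ each stabilize at $3i$ (Examples~\ref{D} and~\ref{M}), so by Lemma~\ref{enough}(1) it suffices to verify $D_n\cong M_n$ for $n\leq 21$, which is done by computer. The conjecture itself remains open for $i\geq 8$.

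Your proposal is not a proof; it is a research program whose central step you yourself flag as unresolved (``The main obstacle will be establishing this identity uniformly in $n$\dots''). That is precisely the gap the paper also cannot close. Your suggested ``preliminary sanity check'' for small $n$, together with the closing remark about upgrading to an isomorphism of $\FI$-modules, is in fact essentially the paper's entire content on this question: the $\FB$-modules $R^i$ and $M^i$ admit (possibly noncanonical) $\FI$-module structures by Remark~\ref{FIfg}, and the authors explicitly observe that producing a canonical $\FI$-structure on $M^i$ would be evidence for (or a route toward) Conjecture~\ref{mpy} --- but neither they nor you have one. There is also a technical imprecision worth correcting: $M_n$ is not computed by the equivariant Kazhdan--Lusztig polynomial of the braid matroid (which governs the intersection cohomology of the reciprocal plane / matroid Schubert variety); rather, its graded dimensions are the coefficients of the $h$-polynomial of the broken circuit complex, and the paper computes the $\mathfrak{S}_n$-character via the recursion of \cite[Theorem 3.2]{MPY} coming from the stratification of the hypertoric variety $X_n$. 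So in addition to being incomplete, the proposed route to the character of $M_n$ starts from the wrong object.
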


In this paper, we prove that Conjecture \ref{mpy} holds in low degree.

\begin{theorem}\label{mpy-low}
For all $i\leq 7$ and all $n$, there is an isomorphism of $\mathfrak{S}_n$-representations $D_n^i\cong M_n^i$.
\end{theorem}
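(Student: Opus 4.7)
The plan is to exploit representation stability, reducing an infinite family of isomorphism statements to a finite, computer-verifiable check for each $i\leq 7$. The first step is to endow both sequences $\{D_n^i\}_n$ and $\{M_n^i\}_n$ with the structure of finitely generated $\FI$-modules, and to obtain explicit upper bounds $g(i)$ on their degrees of generation. For the $D$ side, representation stability for configuration-space cohomology (in the spirit of Church--Ellenberg--Farb) together with the restriction identity $D_n|_{\mathfrak{S}_{n-1}}\cong C_{n-1}$ from Remark~\ref{identify} should yield such a bound. For the $M$ side, an analogous stability statement for the intersection cohomology of the affine hypertoric variety $X_n$ — equivalently, for the Orlik--Terao algebra of the $\mathfrak{sl}_n$ Coxeter arrangement modulo its canonical linear system of parameters — should apply.

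Second, one invokes the standard consequence of finite generation: once the generation degree of an $\FI$-module is bounded, its decomposition into $\mathfrak{S}_n$-irreducibles is completely determined for all $n$ by its values on an initial segment $n\leq N(i)$. So for each $i\leq 7$, it suffices to verify $D_n^i\cong M_n^i$ for the finitely many $n\leq N(i)$.

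Third, one carries out this finite check by computer, using the explicit presentations of $D_n$ and $M_n$ given in the appendix. On the $D$ side it is convenient to work with $C_{n-1}$ via Remark~\ref{identify}, recycling data across different values of $n$; on the $M$ side one computes directly with the Orlik--Terao quotient. In both cases one extracts the $\mathfrak{S}_n$-character of each graded piece and compares them as class functions.

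The main obstacle will be quantitative. A loose application of representation stability gives values of $N(i)$ that are too large for explicit calculation, and both $\dim D_n^i$ and $\dim M_n^i$ grow rapidly in $n$. The technical heart of the argument is therefore to prove generation bounds $g(i)$ sharp enough that the resulting computation is tractable, and to organize the equivariant computation — for instance, one isotypic component at a time, or via inner products against characters of small Young subgroups — so that irreducible multiplicities can be extracted without fully decomposing these large representations.
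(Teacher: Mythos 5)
Your overall strategy matches the paper's: use representation stability to reduce each degree $i\leq 7$ to a finite computation, then carry it out by computer. However, there are two genuine gaps in how you propose to obtain the stability ranges, and these are precisely where the paper's real work happens.

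First, on the $M$ side, there is no ``analogous stability statement'' that applies directly: as the paper points out after Example~\ref{M}, the $\FB$-module $M^i$ does \emph{not} extend canonically to an $\FI$-module (only to an $\FI^{\operatorname{op}}$-module, because the natural maps on $M_n$ go the wrong way). So you cannot simply invoke a CEF-style finite generation theorem for $M^i$. The paper gets around this with a non-obvious detour: the Orlik--Terao algebra $OT^i$ \emph{does} extend to an $\FI^\#$-module (Example~\ref{OT}) and hence is free as an $\FI$-module; one then uses the $\mathfrak{S}_n$-equivariant tensor decomposition $OT_n \cong R_n\otimes M_n$ (Equation~\eqref{MOT}) together with Lemma~\ref{sums} and Theorem~\ref{tensor} to deduce inductively that $M^i$ stabilizes at $3i$. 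Without this decomposition, there is no route to a stability statement for $M^i$ at all, let alone a sharp one.

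Second, on the $D$ side, the restriction identity $D_n|_{\mathfrak{S}_{n-1}}\cong C_{n-1}$ is not enough by itself: it tells you nothing about $D_n$ as an $\mathfrak{S}_n$-representation until you already know $D^i$ stabilizes and at what degree, and it does not give an $\FI$-module structure on $D^i$. The paper instead uses the stronger $\mathfrak{S}_n$-equivariant tensor isomorphism $C_n\cong D_n\otimes W_n$ (Equation~\eqref{CD}) and again Lemma~\ref{sums} plus Theorem~\ref{tensor} to pin down the range $3i$. Finally, you correctly identify that the entire argument hinges on getting bounds sharp enough for the computation to be feasible, but you offer no mechanism for producing them; the sharp sub-additivity of stabilization degree under tensor products (Theorem~\ref{tensor}, resting on the Briand--Orellana--Rosas result on Kronecker coefficients) is exactly the new input that makes the ranges $3i\leq 21$ tractable, and a ``loose application of representation stability'' would indeed not get there.
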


Let $V$ be a graded representation of a finite group $\Gamma$.  We say that $V$ is {\bf strongly equivariantly log concave in degree \boldmath{$m$}}
if, for all $i\leq j\leq k\leq l$ with $j+k=i+l=m$, $V^i\otimes V^l$ is isomorphic to a subrepresentation of $V^j\otimes V^k$.  This may be rewritten
as the following sequence of inclusions:
$$V^0 \otimes V^m \subset V^1\otimes V^{m-1}\subset V^2 \otimes V^{m-2}\subset \cdots\subset\begin{cases}
V^{m/2}\otimes V^{m/2} &\text{if $m$ is even}\\
V^{(m-1)/2}\otimes V^{(m+1)/2} &\text{if $m$ is odd.}\end{cases}$$
We say that $V$ is {\bf strongly equivariantly log concave} if it is strongly equivariantly log concave in all degrees.\footnote{If $\Gamma$ is the trivial group, this is equivalent to the statement that the sequence of dimensions of the graded pieces of $V$ 
is log concave with no internal zeros.}

\begin{conjecture}\label{elc}
For all $n$, the graded $\mathfrak{S}_n$-representations $A_n$, $B_n$, $C_n$, $D_n$, and $M_n$ are all strongly equivariantly log concave.
\end{conjecture}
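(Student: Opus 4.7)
The plan is to attack Conjecture \ref{elc} via representation stability. The key observation is that, for each fixed pair $(i, l)$ with $i + l = m$, the tensor products $V_n^i \otimes V_n^l$ assemble into finitely generated $\FI$-modules in $n$, for each of $V \in \{A, B, C, D\}$, by the work of Church, Ellenberg, and Farb on cohomology of configuration spaces. For $M_n$, the $\FI$-module structure would need to be verified separately, perhaps from the presentation given in the appendix or by relating $M_n$ to $D_n$ through Conjecture \ref{mpy}. The consequence of finite generation is that the multiplicity $[V_n^i \otimes V_n^l : V_\lambda]$ of each irreducible $\mathfrak{S}_n$-representation is eventually a polynomial function of $n$, in the character polynomial formalism.

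Since we are in characteristic zero, the inclusion $V_n^i \otimes V_n^l \hookrightarrow V_n^j \otimes V_n^k$ is equivalent to the sequence of multiplicity inequalities
$$[V_n^i \otimes V_n^l : V_\lambda] \ \leq\ [V_n^j \otimes V_n^k : V_\lambda]$$
for every irreducible $V_\lambda$. For each fixed degree $m$, one enumerates the finitely many quadruples $i \leq j \leq k \leq l$ with $j + k = i + l = m$ and, in the stable range, reduces each of the resulting inequalities to a polynomial non-negativity check in $n$. The finitely many unstable $n$ are handled by computing $V_n^i$ directly from the presentations in the appendix and verifying the multiplicity inequalities with computer algebra.

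The main obstacle, I expect, is computational complexity: the character polynomials become unwieldy as $m$ grows, so the method is realistically feasible only in low degree --- which is presumably why the authors frame this statement as a conjecture rather than a theorem, and only claim to resolve it for small $m$. A uniform proof in all degrees would most plausibly follow from an equivariant refinement of the Hodge--Riemann relations, generalizing the framework that Adiprasito, Huh, and Katz used to prove non-equivariant log concavity for characteristic polynomials of matroids; establishing such an equivariant refinement is itself a central open problem, and I would not expect to resolve it here. A secondary obstacle is the module $M_n$, whose definition is more subtle than the others; I would handle it last, leveraging Theorem \ref{mpy-low} to import log concavity from $D_n$ in the range where the two representations are known to agree.
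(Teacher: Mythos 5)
The statement you are addressing is Conjecture~\ref{elc}, which the paper does \emph{not} prove in full; it only establishes the low-degree partial result Theorem~\ref{elc-low}. You correctly recognize this, and your strategy is in the same spirit as the paper's: use representation stability to reduce each degree-$m$ inequality to finitely many symmetric groups and check those by computer. But there is a concrete gap that would prevent your plan from running as written.

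Your proposal relies on the observation that the FI-modules $V^i\otimes V^l$ are finitely generated, and therefore the multiplicities of each irreducible stabilize. That is true, but finite generation alone tells you stabilization happens \emph{eventually}, not \emph{where}. To reduce to a finite computer check you need an explicit bound on the stability degree of the tensor products, and that is precisely what Theorem~\ref{tensor} in the paper supplies: if $P$ stabilizes at $d$ and $Q$ at $e$, then $P\otimes Q$ stabilizes at $d+e$. This sub-additivity is not a formal consequence of finite generation and does not appear in the standard CEF references you invoke; the paper obtains it from a nontrivial result of Briand--Orellana--Rosas on Kronecker coefficients (see Remark~\ref{FIfg}, which emphasizes exactly this point, and contrasts it with the much weaker bound one would get from off-the-shelf representation-stability tools). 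Without some replacement for Theorem~\ref{tensor}, you cannot determine the cutoff $n$ past which your verification is complete, so your plan stalls at the step where you decide how many $n$ to check.

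Two smaller points. First, once stability is established at some degree $d$, the multiplicity of each $V_{\la[n]}$ is eventually \emph{constant} in $n$, not merely polynomial, and Lemma~\ref{enough}(2) reduces the whole problem to checking the inclusion of $\mathfrak{S}_n$-representations for $n\leq d$; there is no ``polynomial non-negativity'' check left over, and your proposal to split into a polynomial stable range and a finite unstable range is more roundabout than needed. Second, your handling of $M_n$ by importing from $D_n$ via Theorem~\ref{mpy-low} only covers degrees where the two are known to coincide (through degree $7$); the paper's Theorem~\ref{elc-low} reaches degree $8$ for $M_n$ by a separate argument using that $M_n$ is generated in degree one, so if you wanted to match the paper's range you would need that extra step.
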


The conjecture for $A_n$ first appeared in \cite[Conjecture 5.3]{GPY}. 
In this paper, we prove that
Conjecture \ref{elc} holds in low degree.

\begin{theorem}\label{elc-low}
For all $n$, the graded $\mathfrak{S}_n$-representations $A_n$, $B_n$, $C_n$, and $D_n$ are all strongly equivariantly 
log concave in degrees $\leq 14$.  The graded $\mathfrak{S}_n$-representation $M_n$ is strongly equivariantly log concave in degrees $\leq 8$.
\end{theorem}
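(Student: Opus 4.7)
The plan is to use representation stability to reduce each of the infinite families of inclusions in Conjecture \ref{elc} to a finite, computer-assisted verification. For each $V \in \{A, B, C, D, M\}$ and each fixed degree $i$, the character of $V_n^i$ is given by a single character polynomial $P_{V,i}(X_1, X_2, \ldots) \in \Q[X_1, X_2, \ldots]$ in the cycle-counting statistics $X_r(\sigma) = \#\{r\text{-cycles of }\sigma\}$, valid for all $n$ above an explicit stability bound. For $A_n$ this is the FI-module stability of Church--Ellenberg--Farb; for $B_n$, $C_n$, $D_n$ analogous FI-module (or closely related) structures apply, with Remark \ref{identify} providing a direct bridge between $D_n$ and $C_{n-1}$; and for $M_n$ one uses representation stability for the intersection cohomology of the hypertoric family $X_n$. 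In the regime we care about, each $P_{V,i}$ can be extracted explicitly.

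Given these character polynomials, I would test the desired inclusion $V^i \otimes V^l \hookrightarrow V^j \otimes V^k$ by studying the virtual character
\[
\chi\!\left(V^j \otimes V^k\right) - \chi\!\left(V^i \otimes V^l\right),
\]
which is again described by a character polynomial since the product of two character polynomials is a character polynomial. The inclusion exists precisely when this virtual character is an honest character, i.e.\ when the multiplicity of every irreducible $\chi_\lambda$ of $\mathfrak{S}_n$ in it is non-negative. These multiplicities are computed via the standard inner product with irreducible characters (equivalently, through the Frobenius characteristic and Schur expansion), and by stability they depend on $n$ in a controlled, closed-form way above the stability bound.

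The verification then splits into two finite sub-tasks per family: for each valid quadruple $(i,j,k,l)$ with $i \leq j \leq k \leq l$ and $i+l = j+k \leq 14$ (respectively $\leq 8$ in the $M_n$ case), check non-negativity of every multiplicity in the closed-form expression valid above the stability bound, and check the same by direct decomposition for the finitely many $n$ below the bound. Both are well-suited to a computer algebra system such as SageMath. The main obstacle, and the reason for the disparity between the bounds $14$ and $8$, is computational: the number of quadruples, the number of irreducibles $\lambda \vdash n$, and the complexity of the character polynomials all grow rapidly with $m$, and the stability bounds and character polynomials for $M_n$ are markedly more intricate than those for the other four families, forcing the more restrictive range in that case.
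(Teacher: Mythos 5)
Your high-level plan---use representation stability to reduce each infinite family of inclusions to a finite SageMath check---is the paper's plan, but you are glossing over the step that makes the reduction rigorous and that the paper treats as its main technical novelty. To apply Lemma \ref{enough}(2) you need to know at precisely which $d$ the $\FB$-modules $V^j\otimes V^k$ and $V^i\otimes V^l$ stabilize, not merely that they stabilize for some unspecified $d$. That each $V^i$ has a character polynomial valid for $n$ large, and that products of character polynomials are character polynomials, does not by itself give an explicit range in the paper's sense (an explicit set $\Lambda_d$ of partition shapes). The paper derives the needed bound from Theorem \ref{tensor}: if $P$ stabilizes at $d$ and $Q$ at $e$, then $P\otimes Q$ stabilizes at $d+e$, which rests on the Briand--Orellana--Rosas result on Kronecker coefficients and, as the paper stresses in Remark \ref{FIfg}, is sharper than what follows from standard $\FI$-module finite-generation arguments. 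Combined with the known bounds from \cite{HershReiner} (via Examples \ref{B} and \ref{D}), this gives the concrete ranges $n\leq 3m+2$ for $B$ and $n\leq 3m$ for $D$ that the computer checks. Without an explicit sub-additivity statement your argument has a genuine gap: you never establish how far out one must compute.

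Two further structural divergences from the paper are worth noting. First, you propose checking $A_n$ and $C_n$ directly, whereas the paper checks only $B_n$ and $D_n$ and then deduces $A_n$ and $C_n$ formally from $A_n\cong B_n\otimes\mathcal{E}_\Q[t]$ and $C_n\cong D_n\otimes W_n$ via Proposition \ref{tensor-elc}; this is a cleaner and cheaper route, though your direct check would also work if you had the stability bounds. Second, and more substantively, your treatment of $M_n$ is not correct as described. You attribute the weaker bound ($8$ versus $14$) to intricacy of character polynomials for $M_n$, but the paper does not verify log concavity of $M_n$ by direct computation at all. Instead it invokes Theorem \ref{mpy-low} ($D_n^i\cong M_n^i$ for $i\leq 7$) to transfer the $D_n$ result to $M_n$ in degrees $\leq 7$, and then handles the single leftover case $M_n^0\otimes M_n^8\subset M_n^1\otimes M_n^7$ by observing that $M_n$ is generated in degree $1$ (Theorem \ref{M-pres}), so there is a surjection $M_n^1\otimes M_n^7\twoheadrightarrow M_n^8$, which gives the inclusion by semisimplicity. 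The $8$ is exactly the degree where this transfer runs out, not a soft computational threshold. Your sketch would require directly verifying stable character formulas for $M_n$ in degrees up to $8$, which is precisely the expensive computation the paper avoids.
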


\begin{remark}
Conjecture \ref{elc} generalizes to equivariant log concavity statements about matroids, oriented matroids, and hyperplane arrangements
with symmetries, as we explain in Section \ref{sec:elc}.  If we consider the trivial symmetry group, all of these statements boil down to
the log concavity results of Adiprasito--Huh--Katz for (reduced) characteristic polynomials of matroids \cite{AHK} 
and Ardila--Denham--Huh for $h$-polynomials of broken-circuit complexes \cite{ADH}.
Conjecture \ref{elc} is what you get by considering the case of the
matroid, oriented matroid, or hyperplane arrangement associated with the roots of the Lie algebra $\mathfrak{sl}_n$, which are acted on
by the symmetric group $\mathfrak{S}_n$.
\end{remark}

Our approach to Theorem \ref{mpy-low} is to use the theory of representation stability, due to Church--Ellenberg--Farb \cite{CEF}.  Loosely speaking, if $V_n$ is a representation of $\mathfrak{S}_n$ for all $n$, there is a notion of what it means for the sequence
$V$ to {\bf stabilize at \boldmath{$d$}}.  If this happens, then for every $n\geq d$, $V_{n+1}$ can be computed algorithmically from $V_n$.
We show that $D^i$ and $M^i$ each stabilize at $3i$, which means that we can prove Theorem \ref{mpy-low} by checking 
that $D_n\cong M_n$ for all $n\leq 21$.
This type of argument is in some sense the primary motivation for the concept of representation stability, though we are unaware
of another situation in which the theory has been applied in such a direct way to prove that two infinite
sequences of representations of symmetric groups are isomorphic.

Our approach to Theorem \ref{elc-low} is similar, but with an additional subtlety.  
For this theorem, we need to understand when stability occurs, not just for the sequences $B^i$ and $D^i$,
but also for the sequences $B^j\otimes B^k$ and $D^j\otimes D^k$. (We show that the statements about $A$, $C$, and $M$ follow from 
the statements about $B$ and $D$.)
This requires a general statement about the stability range for the tensor product of two stable sequences (Theorem \ref{tensor}),
the proof of which relies on a powerful result about Kronecker coefficients \cite{BOR} 
that has not previously been incorporated into the literature on representation
stability.  

Ultimately, both theorems are proved by using representation stability to reduce to a finite number of cases that can be checked
on a computer.  We perform these checks using the software package SageMath \cite{sagemath}. 

\vspace{\baselineskip}
\noindent
{\em Acknowledgments:}
We are grateful to Vic Reiner for introducing us to the paper \cite{BOR}, to David Speyer
for communicating to us the proof of Proposition \ref{tensor-elc}, 
and to Ben Young for writing preliminary versions of the code that formed the basis for our
computer calculations.  We are also grateful to Ben Knudsen and Claudiu Raciu for conversations over the years regarding the two presentations
in Theorem \ref{two}.  Finally, we thank the referees for their insightful comments and corrections. 

\section{Equivariant log concavity}\label{sec:elc}
Recall that a sequence of non-negative integers $a_0,a_1,\ldots$ is called {\bf log concave} if $a_i^2\geq a_{i-1}a_{i+1}$ for all
$i$, and it is called {\bf log concave with no internal zeros} if there does not exist $i<j<k$ such that $a_j=0$ and $a_i\neq 0\neq a_k$.
This latter condition is equivalent to the statement that, whenever $i\leq j\leq k\leq l$ and $j+k=i+l$, we have $a_ia_l\leq a_ja_k$.
The notion of log concavity with no internal zeros has the advantage that it is preserved under convolution.
That is, if $a_0,a_1,\ldots$ and $b_0,b_1,\ldots$ are both log concave with no internal zeros, then the same is true for $c_0,c_1,\ldots$,
where $c_k = a_0b_k + a_1b_{k-1}+\cdots+a_kb_0$.

Let $V$ be a graded representation of a finite group $\Gamma$.
We say that $V$ is {\bf weakly equivariantly log concave} if, for all $i$, $V^{i-1}\otimes V^{i+1}$
is isomorphic to a subrepresentation of $V^i\otimes V^i$. 
We say that $V$ is {\bf strongly equivariantly log concave} if, whenever $i\leq j\leq k\leq l$ and $j+k=i+l$,
$V^i\otimes V^l$ is isomorphic to a subrepresentation of $V^j\otimes V^k$.  If $\Gamma$ is trivial, weak equivariant log concavity
is equivalent to log concavity of the sequence of dimensions, and strong equivariant log concavity is equivalent to log concavity
with no internal zeros of the sequence of dimensions.  However, this is not the case when $\Gamma$ is nontrivial, as the following
example illustrates.

\begin{example}\label{weak}
Let $\Gamma = S_2$, let $\tau$ denote the trivial representation of $\Gamma$, and let $\sigma$ denote the sign representation.
Define
$$V^i = \begin{cases}
\tau \oplus \sigma^{\oplus 3} &\text{if $i=0$ or $3$}\\
\tau^{\oplus 2} \oplus \sigma^{\oplus 2} &\text{if $i=1$ or $2$}\\
0 &\text{otherwise.}\end{cases}$$
Then $V$ is weakly equivariantly log concave and has no internal zeros, but $V^0\otimes V^3 \cong \tau^{\oplus 10}\oplus \sigma^{\oplus 6}$
is not isomorphic to a subrepresentation of $V^1\otimes V^2 \cong \tau^{\oplus 8}\oplus \sigma^{\oplus 8}$, so $V$ is not strongly equivariantly
log concave.
\end{example}

\begin{example}
Let $V$ be as in Example \ref{weak}, and let $W$ be the graded representation with $W^0=W^1=\tau$ and $W^i=0$ for all $i>1$.
Then both $V$ and $W$ are weakly equivariantly log concave with no internal zeros, but $V\otimes W$ fails to satisfy the weak equivariant log concavity condition when $i=2$.
Hence the property of weak equivariant log concavity with no internal zeros is not preserved under tensor product.
\end{example}

The claim that strong equivariant log concavity is the ``correct notion'' in the equivariant setting is justified by the following proposition, whose proof
was communicated to us by David Speyer.

\begin{proposition}\label{tensor-elc}
If $V$ and $W$ are strongly equivariantly log concave representations of $\Gamma$, then so is $V\otimes W$.
More generally, if $V$ and $W$ are both  strongly equivariantly log concave in degrees $\leq m$ (as defined in the introduction),
then so is $V\otimes W$.
\end{proposition}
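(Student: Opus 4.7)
My plan is to convert the statement into a purely numerical claim about multiplicities of irreducible $\Gamma$-representations, and then reduce it to a classical fact about symmetric unimodal polynomials.

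The first step is a reformulation in terms of $\rho$-multiplicities. Since $\Gamma$ is finite, representations are completely reducible, so for any two $\Gamma$-representations $A, B$ one has $A \hookrightarrow B$ if and only if $[A:\rho] \le [B:\rho]$ for every irreducible $\rho$. Define $f^\rho_V(i,l) := \dim \Hom_\Gamma(\rho, V^i \otimes V^l)$ (with $\Gamma$ acting diagonally on the tensor); this is symmetric in $i,l$ because $V^i \otimes V^l \cong V^l \otimes V^i$. Strong equivariant log concavity of $V$ in degrees $\le m$ then says that for every irreducible $\rho$ and every $n \le m$, the sequence $i \mapsto f^\rho_V(i, n-i)$ on $\{0,\dots,n\}$ is symmetric about $n/2$ and unimodal; and similarly for $W$.

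Next, for $U := V \otimes W$, expand $(V\otimes W)^i = \bigoplus_{p+q=i} V^p \otimes W^q$ and decompose tensor products of $\Gamma$-irreducibles to obtain
\[
f^\rho_U(i,l) \;=\; \sum_{\sigma, \tau \in \hat{\Gamma}} c^\rho_{\sigma,\tau} \cdot h_{\sigma,\tau}(i,l),
\]
where $c^\rho_{\sigma,\tau} := \dim \Hom_\Gamma(\rho, \sigma \otimes \tau) \ge 0$ (these are Kronecker coefficients in the case $\Gamma=\mathfrak{S}_n$), and
\[
h_{\sigma,\tau}(i,l) \;=\; \sum_{\substack{p+q=i\\ r+s=l}} f^\sigma_V(p,r)\, f^\tau_W(q,s)
\]
is a two-dimensional convolution. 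Because the $c^\rho_{\sigma,\tau}$ are nonnegative, it suffices to prove the anti-diagonal symmetric unimodality property (in degrees $\le m$) for each $h_{\sigma,\tau}$.

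To handle $h_{\sigma,\tau}$, fix $n \le m$ and group its anti-diagonal by the total $V$-degree $P := p + r$. Writing $F^\sigma_P(p) := f^\sigma_V(p, P-p)$ and $G^\tau_Q(q) := f^\tau_W(q, Q-q)$, I get
\[
h_{\sigma,\tau}(i, n-i) \;=\; \sum_{P=0}^{n} \bigl(F^\sigma_P \ast G^\tau_{n-P}\bigr)(i),
\]
a sum of ordinary one-dimensional convolutions. By the reformulated hypothesis, each $F^\sigma_P$ and $G^\tau_Q$ is a nonnegative-integer sequence symmetric about its midpoint and unimodal. The main step, and the only conceptually nontrivial one, is the classical fact that the product of two symmetric unimodal polynomials with nonnegative coefficients is again symmetric and unimodal --- which follows, for example, from realizing such polynomials (up to shift) as characters of finite-dimensional $\mathfrak{sl}_2$-representations and applying Clebsch--Gordan. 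Each convolution $F^\sigma_P \ast G^\tau_{n-P}$ is therefore symmetric about $n/2$ and unimodal, and a sum of such sequences sharing that center of symmetry is again symmetric about $n/2$ and unimodal. This establishes the required property for $h_{\sigma,\tau}$ and completes the reduction; the only place where the "degrees $\le m$" version of the hypothesis is actually needed is that every anti-diagonal of $f^\sigma_V$ and $f^\tau_W$ appearing in the decomposition has length $\le n \le m$.
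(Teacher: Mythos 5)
Your proof is correct, and it takes a genuinely different route from the paper's.

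The paper works directly in the virtual representation ring: it writes
\[
2\Big((V\otimes W)^j\otimes(V\otimes W)^k - (V\otimes W)^i\otimes(V\otimes W)^l\Big)
= \sum_{p,q}\Big(V^p\otimes V^q - V^{j-l+q}\otimes V^{k-i+p}\Big)\otimes\Big(W^{j-p}\otimes W^{k-q} - W^{l-q}\otimes W^{i-p}\Big),
\]
observes that each summand is a product of two factors that are either both honest representations or both negatives of honest representations (so the product is always nonnegative), and then divides by $2$. You instead pass to multiplicities of irreducibles, reduce to a two-variable convolution identity for the auxiliary quantities $h_{\sigma,\tau}$, slice along fixed total $V$-degree to get a sum of one-dimensional convolutions, and invoke the classical fact that the convolution (equivalently, product of generating polynomials) of two nonnegative symmetric unimodal sequences is symmetric unimodal, proved via $\mathfrak{sl}_2$ characters and Clebsch--Gordan. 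Your argument is arguably more conceptual, as it exposes the result as a shadow of a familiar unimodality principle; the paper's calculation is more self-contained and elementary, and it also transfers verbatim to the generalization mentioned in the remark following the proposition (any partially ordered ring in which $2x\geq 0$ implies $x\geq 0$), whereas your reduction to irreducible multiplicities uses the finer structure of the positive cone in the representation ring, namely that it is freely spanned by the irreducibles with structure constants $c^\rho_{\sigma,\tau}\geq 0$. Your bookkeeping for the ``degrees $\leq m$'' refinement is also correct: the only anti-diagonals of $f^\sigma_V$ and $f^\tau_W$ that appear have total degree $P$ and $n-P$ with $P+(n-P)=n\leq m$.
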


\begin{proof}
Let $i\leq j\leq k\leq l$ be given with $j+k=i+l$.  
We have 
\begin{eqnarray*} (V\otimes W)^j \otimes (V\otimes W)^k &=& \bigoplus_{p,q} V^p\otimes W^{j-p} \otimes V^q\otimes W^{k-q}\\
&=& \bigoplus_{p,q} V^{j-l+q}\otimes W^{l-q} \otimes V^{k-i+p}\otimes W^{i-p},
\end{eqnarray*}
where the second line is obtained from the first by applying the affine transformation that takes $(p,q)$ to $(j-l+q,k-i+p)$.
Similarly, we have
\begin{eqnarray*} (V\otimes W)^i \otimes (V\otimes W)^l &=& \bigoplus_{p,q} V^p\otimes W^{i-p} \otimes V^q\otimes W^{l-q}\\
&=& \bigoplus_{p,q} V^{j-l+q}\otimes W^{k-q} \otimes V^{k-i+p}\otimes W^{j-p}.
\end{eqnarray*}
Working in the ring of virtual representations of $\Gamma$, consider the sum of the first two lines minus the sum of the last two lines in the 
previous two sentences.  We get
\begin{eqnarray*} && 2\Big((V\otimes W)^j \otimes (V\otimes W)^k - (V\otimes W)^i \otimes (V\otimes W)^l\Big)\\
&=& \sum_{p,q} V^p\otimes W^{j-p} \otimes V^q\otimes W^{k-q} + \sum_{p,q} V^{j-l+q}\otimes W^{l-q} \otimes V^{k-i+p}\otimes W^{i-p}\\
&& - \sum_{p,q} V^p\otimes W^{i-p} \otimes V^q\otimes W^{l-q} - \sum_{p,q} V^{j-l+q}\otimes W^{k-q} \otimes V^{k-i+p}\otimes W^{j-p}\\
&=& \sum_{p,q}\Big(V^p\otimes V^q - V^{j-l+q}\otimes V^{k-i+p}\Big)\otimes \Big(W^{j-p}\otimes W^{k-q} - W^{l-q}\otimes W^{i-p}\Big).
\end{eqnarray*}
By strong equivariant log concavity of $V$, $V^p\otimes V^q - V^{j-l+q}\otimes V^{k-i+p}$ is the class of an honest representation
if $p\geq j-l+q$, and otherwise it is minus the class of an honest representation.
Similarly, by strong equivariant log concavity of $W$, $W^{j-p}\otimes W^{k-q} - W^{l-q}\otimes W^{i-p}$
is the class of an honest representation if $j-p\leq l-q$, and otherwise it is minus the class of an honest representation.
Since $p\geq j-l+q$ if and only if $j-p\leq l-q$, the tensor product
$$\Big(V^p\otimes V^q - V^{j-l+q}\otimes V^{k-i+p}\Big)\otimes \Big(W^{j-p}\otimes W^{k-q} - W^{l-q}\otimes W^{i-p}\Big)$$
is always equal to the class of an honest representation.

In general, any class in the virtual representation ring of $\Gamma$ that is equal to half the class of an honest representation
is itself the class of an honest representation.  Thus
$$(V\otimes W)^j \otimes (V\otimes W)^k - (V\otimes W)^i \otimes (V\otimes W)^l$$ 
is the class of an honest representation, which is equivalent to the statement that $(V\otimes W)^i \otimes (V\otimes W)^l$
is isomorphic to a subrepresentation of $(V\otimes W)^j \otimes (V\otimes W)^k$.

Finally, we need to check that, if $j+k=m$, then we only need to assume that $V$ and $W$ are strongly equivariantly log concave
in degrees $\leq m$.  When we used strong equivariant log concavity of $W$, we used it in degree $m-p-q\leq m$.
When we used strong equivariant log concavity of $V$, we used it in degree $p+q$.  If $p+q>m$, then
we have either $p>j$ or $q>k$, and we also have either $p>i$ or $q>l$.  This implies that
the factor $W^{j-p}\otimes W^{k-q} - W^{l-q}\otimes W^{i-p}$ is equal to zero, and we can therefore ignore that term of the sum.
\end{proof}

\begin{remark}
The definition of strong equivariant log concavity can be generalized by replacing the virtual representation ring of a finite group
with any partially ordered ring.  More precisely, there should be a subset of ``non-negative elements'' (analogous to honest representations)
that includes $0$ and is closed under addition and multiplication, 
and we define a sequence $a_0,a_1,\ldots$ to be {\bf strongly log concave} if, whenever
we have $i\leq j\leq k\leq l$ with $j+k=i+l$, $a_ja_k-a_ia_l$ is non-negative.  Proposition \ref{tensor-elc} generalizes to say that,
if our ring has the added property that $x$ is non-negative whenever $2x$ is non-negative, 
then the convolution of strongly log concave sequences is again strongly log concave.
\end{remark}

We now make a number of conjectures that generalize Conjecture \ref{elc}.
Let $M$ be a matroid of positive rank, 
and let $\Gamma$ be a group acting on the ground set of $M$ preserving the collection of independent sets.
The {\bf Orlik--Solomon algebra} $A(M)$ is defined as a quotient of the exterior algebra with generators
indexed by the ground set of the matroid \cite{OS}, and the {\bf reduced Orlik--Solomon algebra} $B(M)$ is the subalgebra of $A(M)$
generated by differences of the generators.

\begin{conjecture}\label{matroid}
The Orlik--Solomon algebra $A(M)$ and 
the reduced Orlik--Solomon algebra $B(M)$
are strongly equivariantly log concave.
\end{conjecture}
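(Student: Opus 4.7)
The plan is to adapt the Hodge-theoretic proofs of log concavity for characteristic polynomials due to Adiprasito--Huh--Katz \cite{AHK} and for broken-circuit $h$-polynomials due to Ardila--Denham--Huh \cite{ADH} to the equivariant setting, together with a reduction from $A(M)$ to $B(M)$ via Proposition \ref{tensor-elc}.

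First, I would try to reduce to the case of $B(M)$. There is a decomposition of graded vector spaces $A^i(M) \cong B^i(M) \oplus B^{i-1}(M)$, arising from the fact that $A(M)$ is free of rank two over the exterior algebra on any single chosen generator of $A^1(M)$. When $\Gamma$ fixes a point of the ground set, this decomposition is $\Gamma$-equivariant, so $A(M) \cong B(M) \otimes \Lambda$ as graded $\Gamma$-representations, with $\Lambda = \mathbb{Q}[e]/(e^2)$ carrying the trivial action; since $\Lambda$ is trivially strongly equivariantly log concave, Proposition \ref{tensor-elc} then settles the case of $A(M)$. In general, a $\Gamma$-equivariant analogue of the splitting (for instance, via averaging a chosen generator over its $\Gamma$-orbit) needs to be established first.

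Next, for $B(M)$, the approach would be to equip the Chow ring of $M$ with the K\"ahler package of \cite{AHK} and exploit the induced $\Gamma$-action on it. One chooses a $\Gamma$-invariant class $\alpha$ in degree one whose successive powers, paired against an appropriate invariant cycle, recover the reduced characteristic polynomial coefficients $\dim B^i(M)$, and applies an equivariant hard Lefschetz theorem to obtain isomorphisms of $\Gamma$-representations from degree $j$ to degree $l$ whenever $j \leq l$ and $j+l$ equals the top degree. The classical Khovanskii--Teissier inequality would then need to be upgraded to an inclusion in the representation ring, yielding $B^i \otimes B^l \hookrightarrow B^j \otimes B^k$.

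The main obstacle is this last step. While hard Lefschetz in the presence of a group action is well understood, the Khovanskii--Teissier inequality is fundamentally a Cauchy--Schwarz statement about intersection numbers, and it is not obvious what the correct equivariant analogue should look like at the level of containments between honest representations. A promising angle, in the spirit of the argument in Proposition \ref{tensor-elc}, is to seek an equivariant Hodge--Riemann statement guaranteeing that certain virtual classes in the representation ring of $\Gamma$ are honest representations (possibly only after multiplication by a fixed positive integer, exploiting the general fact used at the end of Proposition \ref{tensor-elc} that a virtual class equal to half a representation is itself a representation). Alternatively, for matroids arising from hyperplane arrangements one might try to promote geometric constructions on wonderful compactifications to produce explicit equivariant injections directly, bypassing the need for an inequality altogether.
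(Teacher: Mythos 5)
This statement is Conjecture~\ref{matroid}, which the paper does not prove: it is left open, with the only supporting evidence being the low-degree verification of Theorem~\ref{elc-low} for the braid matroid family, obtained via representation stability and machine computation rather than Hodge theory. So there is no paper argument to compare against; what can be assessed is whether your reductions are sound and whether you have located the actual difficulty.

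Your reduction from $A(M)$ to $B(M)$ via Equation~\eqref{AB} and Proposition~\ref{tensor-elc} is exactly the reduction the paper records in the remark following the conjecture, so that step is right. However, your worry about needing $\Gamma$ to fix a ground set element, or needing to average a chosen generator over its orbit, is misplaced. The Euler derivation $\partial\colon A(M)\to A(M)$ of degree $-1$, sending each standard generator to $1$, is defined with no choices and is automatically $\Gamma$-equivariant; its kernel is $B(M)$, and it produces a $\Gamma$-equivariant short exact sequence $0\to B^p(M)\to A^p(M)\to B^{p-1}(M)\to 0$. Since $\Q[\Gamma]$ is semisimple, this splits $\Gamma$-equivariantly, giving $A^p(M)\cong B^p(M)\oplus B^{p-1}(M)$ as $\Gamma$-representations with no hypothesis on fixed points, which is the content of \cite[Proposition 2.8]{YuzOS}. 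Your caution here introduces complexity the problem does not have.

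Your diagnosis of the central obstruction is correct, and it is exactly why the conjecture is open. Hard Lefschetz for the Chow ring of a matroid is $\Gamma$-equivariant for free, since the Lefschetz operator commutes with the $\Gamma$-action, and yields $B^j\cong B^l$ as $\Gamma$-representations when $j+l$ is the top degree. But the log concavity in \cite{AHK} and \cite{ADH} comes from the Hodge--Riemann relations, which are a sign condition on an intersection pairing and produce a numerical inequality through a Cauchy--Schwarz argument on a single bilinear form. Nobody knows how to upgrade that inequality to a containment of honest $\Gamma$-representations, and nothing in the present paper addresses it. What you have written is a reasonable description of a research program and an honest account of where it gets stuck; it is not a proof, and no proof is currently known.
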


\begin{remark}
When $M$ is the braid matroid of rank $n-1$,
$A(M)$ is isomorphic to $A_n$ and $B(M)$ is isomorphic to $B_n$.
More generally, if $M$ is the matroid associated with a finite set of hyperplanes in a complex vector space,
then $A(M)$ is isomorphic to the cohomology ring of the complement of the hyperplanes and $B(M)$
is isomorphic to the cohomology ring of the projectivized complement \cite{OS}.
\end{remark}

\begin{remark}
We always have a canonical isomorphism \cite[Proposition 2.18]{YuzOS}
\begin{equation}\label{AB}A(M)\cong B(M)\otimes\mathcal{E}_\Q[t],\end{equation}
where $\mathcal{E}_\Q[t]$ is the exterior algebra on the single variable $t$.  Hence 
strong equivariant log concavity of $B(M)$ implies
strong equivariant log concavity of $A(M)$
by Proposition \ref{tensor-elc}.
\end{remark}

\begin{remark}  
The dimensions of the graded pieces of $A(M)$ and $B(M)$ are the coefficients of the characteristic
polynomial and the reduced characteristic polynomial of $M$, respectively.  Thus, when $\Gamma$ is the trivial group,
Conjecture \ref{matroid} specializes to the main theorem of
Adiprasito--Huh--Katz \cite[Theorem 9.9]{AHK}. 
\end{remark}


\begin{remark}
The conjecture that $A(M)$ is strongly equivariantly log concave
originally appeared in \cite[Conjecture 5.3 and Remark 5.8]{GPY}, 
where it was proved for uniform matroids with $\Gamma$ equal to the full group of permutations of the ground set \cite[Proposition 5.7
and Remark 5.8]{GPY}.  The argument there can easily be adapted to prove strong equivariant log concavity of $B(M)$ for uniform matroids as well.
\end{remark}

Let $M$ be an oriented matroid, and let $\Gamma$ be a group acting on the ground set of $M$ preserving the collection of signed
circuits.  The {\bf Cordovil algebra} $C(M)$ is defined as a quotient of the polynomial ring with generators indexed by the elements of the ground
set of $M$ \cite{Co}.

\begin{conjecture}\label{oriented}
The Cordovil algebra $C(M)$ is a strongly equivariantly log concave graded representation of $\Gamma$.
\end{conjecture}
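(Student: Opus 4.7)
The plan is to pursue an equivariant enhancement of the combinatorial Hodge-theoretic methods that yield the non-equivariant case. When $\Gamma$ is trivial, log concavity with no internal zeros of the Hilbert series of $C(M)$ follows from the theorem of Ardila--Denham--Huh \cite{ADH} on $h$-polynomials of broken-circuit complexes, since that Hilbert series coincides, up to the standard reindexing, with the $h$-polynomial of the broken-circuit complex of the underlying matroid of $M$. The goal is therefore to upgrade this numerical inequality to an inclusion of $\Gamma$-representations.

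The most natural route is via an equivariant K\"ahler package. First, one should identify an auxiliary graded ring carrying the K\"ahler package of \cite{AHK} or \cite{ADH} whose graded pieces agree with those of $C(M)$ as $\Gamma$-representations; the augmented or conormal Chow ring of the underlying matroid is a candidate. Second, inside the K\"ahler cone of that ring---which is a $\Gamma$-invariant open convex subset of the degree-one piece because it is cut out by combinatorial data preserved by $\Gamma$---produce a $\Gamma$-fixed K\"ahler class $\ell$ by averaging. Third, conclude that the Hard Lefschetz isomorphisms $\ell^{k-j}$ for $j+k$ equal to the top degree are $\Gamma$-equivariant, which immediately gives equivariant Poincar\'e duality and inclusions $C(M)^j \hookrightarrow C(M)^k$ as $\Gamma$-representations for $j \leq k \leq d-j$.

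The final and hardest step is to deduce the log-concavity inclusions $C(M)^i \otimes C(M)^l \hookrightarrow C(M)^j \otimes C(M)^k$ (for $i \leq j \leq k \leq l$, $i+l = j+k$) from an equivariant Hodge--Riemann statement. Scalar Hodge--Riemann promotes Hard Lefschetz to a sign-definiteness property for a symmetric bilinear form on the primitive subspaces; an equivariant analogue would give a $\Gamma$-invariant orthogonal primitive decomposition of each graded piece together with a character-level positivity assertion. Given such data, a virtual-representation-ring manipulation mirroring the bookkeeping in the proof of Proposition~\ref{tensor-elc} should produce the desired inclusion.

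The principal obstacle is precisely this equivariant Hodge--Riemann step: the flip-based inductions of \cite{AHK} and \cite{ADH} do not manifestly respect the $\Gamma$-action, and no equivariant K\"ahler package for Cordovil-type algebras currently appears in the literature. A realistic intermediate target would therefore be to verify Conjecture~\ref{oriented} for special families---uniform oriented matroids under the full symmetric group action (adapting the argument sketched for $A(M)$ in \cite{GPY}), the braid oriented matroid in low degrees (extending Theorem~\ref{elc-low}), and oriented matroids of small rank where the K\"ahler package can be checked by direct computation---rather than attempting a uniform proof at this stage.
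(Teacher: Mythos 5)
This statement is a \emph{conjecture} in the paper, not a theorem: the authors do not prove Conjecture~\ref{oriented} in general, and no proof exists in the paper to compare your proposal against. What the paper actually does is (a) record in a remark that when $\Gamma$ is trivial, the conjecture reduces to the main theorem of Adiprasito--Huh--Katz \cite{AHK}, since the graded dimensions of $C(M)$ are the absolute values of the coefficients of the characteristic polynomial of the underlying matroid; and (b) prove a low-degree special case for the oriented braid matroid ($C_n$ in degrees $\leq 14$) by an entirely different method, namely representation stability plus computer verification, as in Theorem~\ref{elc-low}. Your proposal is a research program, not a proof, and you correctly flag this yourself: the equivariant Hodge--Riemann step you identify as the crux does not currently exist in the literature, and the flip-based inductions in \cite{AHK} and \cite{ADH} are not manifestly $\Gamma$-equivariant. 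So the honest verdict is that the proposal is a plausible blueprint whose essential ingredient is missing, which is consistent with the statement remaining a conjecture in the paper.

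One concrete error worth correcting: you attribute the non-equivariant case of Conjecture~\ref{oriented} to Ardila--Denham--Huh on $h$-polynomials of broken-circuit complexes, claiming the Hilbert series of $C(M)$ matches the $h$-polynomial. That is not right. By \cite[Corollary 2.8]{Co} the Hilbert function of the Cordovil algebra is given by the characteristic polynomial coefficients, i.e.\ the $f$-vector of the broken-circuit complex, so the relevant non-equivariant input is \cite{AHK}. The $h$-polynomial result of \cite{ADH} is instead the right non-equivariant input for $M(\cA)$ (Conjecture~\ref{arrangement}), the quotient of the Orlik--Terao algebra by its canonical linear system of parameters. The two results live in the same Hodge-theoretic circle of ideas, but conflating them will lead you to the wrong auxiliary ring when you try to set up the equivariant K\"ahler package. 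It is also worth emphasizing that even equivariant Hard Lefschetz alone (from a $\Gamma$-fixed ample class, which averaging does give you) yields only the inclusions $C(M)^j \hookrightarrow C(M)^{d-j}$; passing from there to the tensor-product inclusions in the definition of strong equivariant log concavity genuinely requires the Hodge--Riemann form and a careful primitive decomposition, and that bookkeeping is where, even non-equivariantly, the real content of \cite{AHK} resides. Your proposed fallback of checking special families is sensible, and for the braid case the paper already demonstrates that representation stability is a more effective tool than Hodge theory for obtaining partial results.
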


\begin{remark}
When $M$ is the oriented braid matroid of rank $n-1$, the Cordovil algebra $C(M)$ is isomorphic to $C_n$.
More generally, if $M$ is the oriented matroid associated with a finite set $\cA$ of hyperplanes in a real vector
space $V$, then $C(M)$ is isomorphic to the cohomology of the space $$V\otimes\R^3\setminus\bigcup_{H\in \cA} H\otimes\R^3,$$
with degrees halved; see \cite[Corollary 5.6]{dLS} or \cite[Example 5.8]{Moseley}.
\end{remark}

\begin{remark}
As with the Orlik--Solomon algebra, the dimensions of the graded pieces of the Cordovil algebra are the coefficients of the characteristic polynomial
of the underlying matroid \cite[Corollary 2.8]{Co}.  This means that, in the case where $\Gamma$ is the trivial group,
Conjecture \ref{oriented} follows from \cite{AHK}.
\end{remark}

Let $\cA$ be a finite set of hyperplanes in a vector space $V$, equipped with a linear action of $\Gamma$
that preserves the hyperplanes.  The {\bf Orlik--Terao algebra} $OT(\cA)$
is defined as the subalgebra of rational functions on $V$ generated by the reciprocals of the linear functions that vanish
on the hyperplanes.  This algebra is Cohen--Macaulay, and it comes equipped with a canonical linear system of parameters \cite[Proposition 7]{PS}.
We denote by $M(\cA)$ the quotient of $OT(\cA)$ by this linear system of parameters.  The {\bf Artinian Orlik--Terao algebra} $AOT(\cA)$
is defined as the quotient of $OT(\cA)$ by the squares of the generators.

\begin{conjecture}\label{arrangement}
The algebras $M(\cA)$ and $AOT(\cA)$ are strongly equivariantly log concave graded representations of $\Gamma$.
\end{conjecture}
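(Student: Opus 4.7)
The plan is to attack Conjecture \ref{arrangement} by developing an equivariant version of the K\"ahler package used in \cite{AHK}. For $M(\cA)$, I would first realize this algebra geometrically as an intersection cohomology ring of a projective variety $X_\cA$ associated to $\cA$ (such as a suitable wonderful compactification or projective hypertoric variety), carrying a natural $\Gamma$-action. Picking a $\Gamma$-invariant ample class $L \in M(\cA)^1$, the classical hard Lefschetz theorem gives $\Gamma$-equivariant isomorphisms $L^{d-2i} \colon M(\cA)^i \xrightarrow{\sim} M(\cA)^{d-i}$, and the Hodge--Riemann relations say that the bilinear form $(x,y) \mapsto \deg(L^{d-2i-2} x y)$ is $(-1)^i$-definite on the primitive part $P^i \subset M(\cA)^i$. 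These pieces of structure are automatically $\Gamma$-equivariant, and they already yield palindromic symmetry of $M(\cA)$ as a graded $\Gamma$-representation.

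The next (and crucial) step is to convert equivariant Hodge--Riemann into the subrepresentation inclusions $M(\cA)^i \otimes M(\cA)^l \hookrightarrow M(\cA)^j \otimes M(\cA)^k$ required for strong equivariant log concavity. The multiplication pairing $M(\cA)^i \otimes M(\cA)^{l-i} \to M(\cA)^l$, combined with the Lefschetz isomorphisms, provides a natural $\Gamma$-equivariant bilinear setup linking the two sides; the hope is that an equivariant signature argument on the primitive decomposition promotes the non-equivariant numerical inequality to an honest representation-theoretic inclusion. Given such an inclusion for ``basic'' pairs $(i,l)$, Proposition \ref{tensor-elc} together with the Lefschetz-decomposition recurrences should propagate it to all admissible pairs. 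For $AOT(\cA)$, which agrees with $M(\cA)$ as a graded vector space, the strategy is parallel: either establish a $\Gamma$-equivariant isomorphism $AOT(\cA) \cong M(\cA)$ via a deformation argument, or adapt the K\"ahler-package argument directly, using that the presentation of $AOT(\cA)$ by squares of generators is itself $\Gamma$-invariant.

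The main obstacle is the passage from equivariant Hodge--Riemann to strong equivariant log concavity. The non-equivariant inequality $a_i a_l \leq a_j a_k$ is a clean statement about signatures of real bilinear forms, but equivariantly one needs an actual containment of representations, which is a considerably finer statement with no known general mechanism: character positivity conditions are much more delicate than signature conditions. A fallback plan, mirroring the approach taken in the paper for Theorem \ref{elc-low}, would be to restrict attention to $\Gamma$-symmetric families $\{\cA_n\}$ (such as the reflection arrangements of the other classical Coxeter types) where $M(\cA_n)$ and $AOT(\cA_n)$ can be shown to satisfy representation stability; one could then reduce the conjecture in each fixed degree to a finite computer verification, extending Theorem \ref{elc-low} beyond the $\mathfrak{sl}_n$ case.
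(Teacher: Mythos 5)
This is a conjecture in the paper, not a theorem: the authors do not prove Conjecture \ref{arrangement} in any generality, and they only establish low-degree cases of the $\mathfrak{sl}_n$ specialization via representation stability and computer verification (Theorem \ref{elc-low}). So there is no paper proof to compare your argument against; what you have written is an attack plan for an open problem, and your fallback strategy in the last paragraph essentially reproduces what the paper actually does.

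You correctly identify the central obstruction: the K\"ahler package (hard Lefschetz and Hodge--Riemann) underlying \cite{AHK} and \cite{ADH} produces log concavity of dimensions via signature arguments, and there is no known mechanism to upgrade a signature statement to an actual $\Gamma$-equivariant containment of representations. That is precisely why the paper states this as a conjecture. The remark after Conjecture \ref{arrangement} explicitly invokes \cite{AHK} and \cite{ADH} only for the $\Gamma$-trivial case, and the proof of Theorem \ref{elc-low} sidesteps the issue entirely by reducing to finitely many cases through stability ranges (Examples \ref{B}, \ref{D}, \ref{M}, Theorem \ref{tensor}, Lemma \ref{enough}) and checking those by machine. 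One concrete error in your sketch: $AOT(\cA)$ does \emph{not} agree with $M(\cA)$ as a graded vector space in general. As noted in the paper, the graded dimensions of $AOT(\cA)$ are the coefficients of the characteristic polynomial of the matroid, whereas those of $M(\cA)$ are the coefficients of the $h$-polynomial of the broken circuit complex; these differ (e.g.\ for $\mathfrak{sl}_n$, $C_n = AOT$ has total dimension $n!$ while $M_n$ has total dimension $(n-1)!$). So there is no deformation argument identifying the two, and the two halves of the conjecture require genuinely separate treatment.
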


\begin{remark}
When $\cA$ is the Coxeter arrangement associated with $\mathfrak{sl}_n$, $M(\cA) \cong M_n$.
More generally, when $V$ is a vector space over the rational numbers, $OT(\cA)$ is isomorphic to the torus equivariant
intersection cohomology of the hypertoric variety associated with $\cA$, and $M(\cA)$ is isomorphic to the ordinary intersection cohomology
(both with degrees halved) \cite[Corollary 4.5]{TP08}.  We note that intersection cohomology does not usually come equipped
with a ring structure; the fact that it does in this case is a special feature of hypertoric varieties.
\end{remark}

\begin{remark}\label{AOTC}
Suppose that $V$ is a vector space over the real numbers and $M$ is the oriented matroid associated with $\cA$.
The Artinian Orlik--Terao algebra $AOT(\cA)$ and the Cordovil algebra $C(M)$
are typically not isomorphic as rings, but they are isomorphic as graded representations of $\Gamma$.\footnote{We thank Matt Douglass, G\"otz Pfeiffer, Vic Reiner, and Gerhard R\"ohrle for informing us of this fact and outlining the proof.}
If in addition $\cA$ is unimodular, meaning that the hyperplanes have rational slope with respect to some lattice and the subgroup generated
by any subset of the primitive normal vectors is saturated in that lattice, then there is a canonical graded ring isomorphism $AOT(\cA) \cong C(M)$.
The Coxeter arrangement associated with $\mathfrak{sl}_n$ has this property, thus its Artinian Orlik--Terao algebra is isomorphic to $C_n$.
\end{remark}

\begin{remark}
As with the Orlik--Solomon algebra and the Cordovil algebra, the dimensions of the graded pieces of the Artinian
Orlik--Terao algebra are the coefficients of the characteristic polynomial of the associated matroid.
On the other hand, the dimensions of the graded pieces of $M(\cA)$ are the coefficients of the $h$-polynomial of the broken circuit complex
of the associated matroid \cite[Proposition 7]{PS}, which is known to form a log concave sequence with no internal zeros 
by \cite[Theorem 1.4]{ADH}.
Thus Conjecture \ref{arrangement} holds when $\Gamma$ is trivial.
\end{remark}

\section{Representation stability}
Let $\operatorname{C}$ be a category.  We will refer to a functor from $\operatorname{C}$ to $\Vec_\Q$ 
as a {\bf \boldmath{$\operatorname{C}$}-module}.
The three main categories that we will discuss are the category $\FB$ of finite sets with bijections, the category $\FI$
of finite sets with injections, and the category $\FI^\#$ of finite sets with partially defined injections.
Given an $\FB$-module $P$ and a natural number $n$, we obtain a representation $P_n := P([n])$ of the symmetric group $\mathfrak{S}_n$,
and $P$ is determined up to isomorphism by the collection $\{P_n\mid n\in\N\}$.

Given a partition $\la$, we write $V_\la$ to denote the corresponding irreducible representation of $\mathfrak{S}_{|\la|}$.
Given an integer $n\geq |\la| + \la_1$, we write
$$\la[n] := (n-|\la|,\la_1,\ldots,\la_l)$$ for the partition of $n$ obtained from $\la$ by adding a new first part of size $n-|\la|$.
For any $d$, let $$\Lambda_d:= \{\la\mid |\la| + \la_1\leq d\}.$$
Given an $\FB$-module $P$ and a positive integer $d$, we say that
{\bf \boldmath{$P$} stabilizes at \boldmath{$d$}} if there exists a collection of natural numbers
$\{r_\la\mid \la\in\Lambda_d\}$ such that,
for all $n\geq d$, there is an isomorphism of $\mathfrak{S}_n$-representations\footnote{This terminology does not
imply sharpness.  Any $\FB$-module that stabilizes at $d$ also stabilizes at $e$ for all $e\geq d$.}
$$P_n \cong \bigoplus_{\la\in\Lambda_d} V_{\la[n]}^{\oplus r_\la}.
$$
If an $\FB$-module $P$ stabilizes at $d$ for some $d$, then we will say that $P$ is {\bf stable}.  The following two lemmas are straightforward.

\begin{lemma}\label{sums}
Suppose that $P$ and $Q$ are $\FB$-modules.  If any two of the modules $P$, $Q$, and $P\oplus Q$ stabilize at $d$,
so does the third.
\end{lemma}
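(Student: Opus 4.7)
The plan is to handle the three directions of the ``any two implies the third'' statement separately, with the bulk of the argument resting on the uniqueness of the decomposition of a finite-dimensional $\mathfrak{S}_n$-representation into irreducibles.

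First I would dispose of the easy direction: if both $P$ and $Q$ stabilize at $d$ with multiplicity data $\{r^P_\la\}$ and $\{r^Q_\la\}$, then for every $n\geq d$ we have
\[
(P\oplus Q)_n \;\cong\; P_n \oplus Q_n \;\cong\; \bigoplus_{\la\in\Lambda_d} V_{\la[n]}^{\oplus (r^P_\la + r^Q_\la)},
\]
so $P\oplus Q$ stabilizes at $d$ with multiplicities $r^P_\la + r^Q_\la$. For the other two (symmetric) cases, suppose that $P$ and $P\oplus Q$ stabilize at $d$ with multiplicity data $\{r^P_\la\}$ and $\{r^{P\oplus Q}_\la\}$. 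The key observation is that the partitions $\la[n]$ for $\la\in\Lambda_d$ are pairwise distinct whenever $n\geq d$, because $\la$ can be recovered from $\la[n]$ by removing the first part. Hence the irreducibles $\{V_{\la[n]}\mid \la\in\Lambda_d\}$ are pairwise non-isomorphic.

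For each $n\geq d$, write out $Q_n = (P\oplus Q)_n \ominus P_n$ in the Grothendieck group of $\mathfrak{S}_n$-representations. By uniqueness of decomposition into irreducibles, the multiplicity of $V_{\la[n]}$ in $Q_n$ equals $r^{P\oplus Q}_\la - r^P_\la$, and no other irreducibles can appear (since no other irreducibles appear in $(P\oplus Q)_n$). Setting $r^Q_\la := r^{P\oplus Q}_\la - r^P_\la$, these multiplicities are non-negative integers because $Q_n$ is an honest representation, and they are independent of $n$. This gives the required uniform decomposition of $Q_n$ for all $n\geq d$, so $Q$ stabilizes at $d$.

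There is no real obstacle here; the statement is essentially bookkeeping once one notes that the labels $\la[n]$ distinguish the irreducibles in the stable range, so that multiplicities subtract. If any obstacle arises, it is purely notational: making sure the index set $\Lambda_d$ is used consistently for all three modules so that the ``difference'' decomposition is meaningful.
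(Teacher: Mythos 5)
Your proof is correct. The paper itself offers no proof of this lemma, remarking only that it is ``straightforward,'' and your argument supplies exactly the intended bookkeeping: the forward direction is additivity of multiplicities, and the converse directions work because for $n \geq d$ the map $\la \mapsto \la[n]$ is injective on $\Lambda_d$, so the stable multiplicities $r^{P\oplus Q}_\la - r^P_\la$ are well-defined, non-negative, and independent of $n$. Nothing is missing.
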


\begin{lemma}\label{enough}
Suppose that $P$ and $Q$ are $\FB$-modules and that $P$ and $Q$ both stabilize at $d$.
\begin{enumerate}
\item If $P_n\cong Q_n$ for all $n\leq d$, then $P\cong Q$.
\item If $P_n$ is isomorphic to a subrepresentation of $Q_n$ for all $n\leq d$, then $P$ is isomorphic to a submodule of $Q$.
\end{enumerate}
\end{lemma}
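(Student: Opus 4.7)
The plan is to reduce both parts to a single combinatorial fact: for any $n\geq d$, the assignment $\lambda\mapsto \lambda[n]$ is injective on $\Lambda_d$, so the irreducible representations $\{V_{\lambda[n]}\mid \lambda\in\Lambda_d\}$ of $\mathfrak{S}_n$ are pairwise non-isomorphic. This is immediate: given $\lambda\in\Lambda_d$ and $n\geq d\geq |\lambda|+\lambda_1$, the tuple $(n-|\lambda|,\lambda_1,\ldots,\lambda_l)$ is indeed a partition, and from it one reads off $|\lambda|$ (hence $\lambda$) by deleting the first part.

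For part (1), let $\{r_\lambda\}$ and $\{s_\lambda\}$ be the stabilization multiplicities of $P$ and $Q$. Specializing the stabilization isomorphism to $n=d$ gives
\[
P_d\;\cong\;\bigoplus_{\lambda\in\Lambda_d} V_{\lambda[d]}^{\oplus r_\lambda}\and
Q_d\;\cong\;\bigoplus_{\lambda\in\Lambda_d} V_{\lambda[d]}^{\oplus s_\lambda}.
\]
Since $P_d\cong Q_d$ by hypothesis and the $V_{\lambda[d]}$ are pairwise distinct irreducibles, $r_\lambda=s_\lambda$ for every $\lambda\in\Lambda_d$. Plugging this back into the stabilization isomorphism yields $P_n\cong Q_n$ for every $n\geq d$, while the remaining cases $n<d$ are supplied by the hypothesis.

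For part (2), I would first observe that because $\FB$ has only isomorphisms as morphisms, an $\FB$-submodule of $Q$ amounts to choosing an $\mathfrak{S}_n$-subrepresentation of $Q_n$ for each $n$, and existence of an injection $P\hookrightarrow Q$ of $\FB$-modules is equivalent to the existence of $\mathfrak{S}_n$-equivariant injections $P_n\hookrightarrow Q_n$ for every $n$. Applied at $n=d$, the hypothesis together with the distinctness of the $V_{\lambda[d]}$ forces $r_\lambda\leq s_\lambda$ for every $\lambda\in\Lambda_d$. The same inequality of multiplicities, via the stabilization formula, produces the required embedding at every $n>d$, and the embeddings for $n<d$ are given.

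There is no genuine obstacle: both parts are essentially an unpacking of the definition of stabilization, resting on the injectivity of $\lambda\mapsto \lambda[n]$ on $\Lambda_d$. The only mild subtlety worth flagging is that, in part (2), we are not asked to extend a given family of embeddings compatibly across $n$, but merely to exhibit some submodule of $Q$ isomorphic to $P$; this is why the multiplicity comparison argument suffices and no coherence issue arises.
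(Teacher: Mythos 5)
The paper declares Lemmas \ref{sums} and \ref{enough} to be ``straightforward'' and supplies no proof, so there is nothing in the text to compare your argument against. Your proof is correct and is the expected argument: for $n\geq d$ the map $\la\mapsto\la[n]$ is injective on $\Lambda_d$ (since $|\la|+\la_1\leq d\leq n$ guarantees $\la[n]$ is a partition, and deleting its first part recovers $\la$), so specializing the stabilization isomorphisms at $n=d$ and comparing multiplicities of the pairwise non-isomorphic irreducibles $V_{\la[d]}$ forces $r_\la=s_\la$ in part (1) and $r_\la\leq s_\la$ in part (2); this propagates the hypothesis from $n\leq d$ to all $n$. Your observation that an $\FB$-module is nothing but a sequence of $\mathfrak{S}_n$-representations (since $\FB$ is a groupoid), so that exhibiting a submodule requires no coherence beyond having an $\mathfrak{S}_n$-equivariant embedding for each $n$ separately, is exactly the point that makes part (2) go through without further work.
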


The following theorem, which is proved using slightly different language in \cite[Theorem 1.2]{BOR}, is not at all straightforward.

\begin{theorem}\label{tensor}
Suppose that $P$ and $Q$ are $\FB$-modules such that $P$ stabilizes at $d$ and $Q$ stabilizes at $e$.
Then $P\otimes Q$ stabilizes at $d+e$.
\end{theorem}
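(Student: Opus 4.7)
The plan is to reduce the claim to a stability statement about Kronecker products of symmetric group irreducibles and then invoke the theorem of Briand--Orellana--Rosas \cite[Theorem 3.1]{BOR}. The hard work is entirely in the latter; the former is a bookkeeping exercise.

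For $n\geq d+e$, both $P_n$ and $Q_n$ lie in their respective stable ranges, so
$$
(P\otimes Q)_n \;\cong\; \bigoplus_{\la\in\Lambda_d,\,\mu\in\Lambda_e} \bigl(V_{\la[n]}\otimes V_{\mu[n]}\bigr)^{\oplus r_\la s_\mu},
$$
with the outer multiplicities $r_\la s_\mu$ independent of $n$. Using Lemma \ref{sums}, it therefore suffices to prove that for each individual pair $(\la,\mu)\in\Lambda_d\times\Lambda_e$ and for all $n\geq d+e$, the tensor product $V_{\la[n]}\otimes V_{\mu[n]}$ decomposes as
$$
V_{\la[n]}\otimes V_{\mu[n]} \;\cong\; \bigoplus_{\nu\in\Lambda_{d+e}} V_{\nu[n]}^{\oplus g_{\la,\mu}^\nu},
$$
with multiplicities $g_{\la,\mu}^\nu$ depending only on $\la,\mu,\nu$ and not on $n$.

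This last statement is exactly the content of the Briand--Orellana--Rosas theorem. Their result shows that the Kronecker coefficient $g(\la[n],\mu[n],\nu[n])$ becomes independent of $n$ once $n\geq |\la|+|\mu|+\la_1+\mu_1$, a quantity bounded above by $d+e$, and moreover that in this stable range the coefficient vanishes unless $|\nu|+\nu_1\leq d+e$, so that $\nu\in\Lambda_{d+e}$ and $\nu[n]$ is a partition of the expected shape. Assembling this pairwise decomposition over all $(\la,\mu)\in\Lambda_d\times\Lambda_e$ and collecting multiplicities produces the required stable decomposition of $(P\otimes Q)_n$ at $d+e$.

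The main obstacle is the sharpness of the stabilization bound. A soft argument from the general theory of $\FI$- or $\FI^\#$-modules would at best yield stabilization for some bound that grows faster than $d+e$, which would be too weak for the applications to the tensor products $B^j\otimes B^k$ and $D^j\otimes D^k$ needed in Theorem \ref{elc-low}. The explicit linear bound in $|\la|+\la_1$ and $|\mu|+\mu_1$, together with the vanishing of the stable Kronecker coefficient outside $\Lambda_{d+e}$, is genuinely a result about Kronecker coefficients rather than a general categorical fact, and it is the heart of what \cite{BOR} provides; once cited, the rest of the argument is formal.
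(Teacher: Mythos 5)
Your argument is correct and follows the same route as the paper: decompose $(P\otimes Q)_n$ over pairs $(\la,\mu)\in\Lambda_d\times\Lambda_e$, invoke Briand--Orellana--Rosas to get stabilization of each $V_{\la[n]}\otimes V_{\mu[n]}$ at $|\la|+\la_1+|\mu|+\mu_1\leq d+e$ (both the $n$-independence of the Kronecker coefficients and the bound on which $\nu$ can appear), and assemble via Lemma \ref{sums}. The paper states the per-pair stabilization bound slightly more sharply (at $|\la|+\la_1+|\mu|+\mu_1$ rather than the weaker $d+e$), but this makes no difference to the conclusion.
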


\begin{proof}
Write $\{r_\la\mid \la\in\Lambda_d\}$ and $\{s_\mu\mid\mu\in\Lambda_e\}$ for the multiplicities associated with $P$ and $Q$.
For all $n\geq \max\{d,e\}$, we have
$$(P\otimes Q)_n  =  P_n\otimes Q_n \cong \bigoplus_{\substack{\la\in\Lambda_d\\ \mu\in\Lambda_e}} V_{\la[n]}^{\oplus r_\la}\otimes V_{\mu[n]}^{\oplus s_\mu} \cong \bigoplus_{\substack{\la\in\Lambda_d\\ \mu\in\Lambda_e}} \left(V_{\la[n]}\otimes V_{\mu[n]}\right)^{\oplus r_\la s_\mu}.$$
By \cite[Theorem 1.2]{BOR}, the $\FB$-module that sends $n$ to $V_{\la[n]}\otimes V_{\mu[n]}$ for $n\geq \max\{|\la|+\la_1,|\mu|+\mu_1\}$
and to $0$ otherwise stabilizes at $|\la| + \la_1 + |\mu| + \mu_1 \leq d + e$.  
Since this is true for all $\la\in\Lambda_d$ and $\mu\in\Lambda_e$, the result follows.
\end{proof}

There is a unique $\FI$-module $P(\la)$ such that, for any $\FI$-module $Q$, we have
$$\Hom_{\FI\text{\!-mod}}(P(\la), Q) = \Hom_{S_{|\la|}\text{-mod}}(V_\la, Q_{|\la|}).$$
An $\FI$-module is called {\bf free} if it is isomorphic to a direct sum of $\FI$-modules of the form $P(\la)$.
An $\FI$-module is called {\bf finitely generated} if it is isomorphic to a quotient of a free $\FI$-module
with finitely many summands.

\begin{remark}\label{FIfg}
The central observation of Church--Ellenberg--Farb is that an $\FB$-module stabilizes if and only if 
the $\FB$-module structure admits an extension to a finitely generated $\FI$-module structure \cite[Theorem 1.13]{CEF}.
Since tensor products of finitely generated $\FI$-modules are again finitely generated \cite[Proposition 2.3.6]{CEF},
this observation immediately implies that the tensor product of two stable $\FB$-modules is again stable.
However, the statement that the point at which stabilization occurs is weakly sub-additive under tensor product (Theorem \ref{tensor}) 
is not at all clear  from the representation stability literature, and relies instead on the work of Briand--Orellana--Rosas.  This result
is sharper, for example, than the one that one obtains from \cite[Proposition 2.23]{KupersMiller}.
\end{remark}

\begin{remark}
Theorem \ref{tensor} is particularly interesting when $P$ and $Q$ are restrictions to $\FB$ of free $\FI$-modules.
For all $n\leq |\la|$, we have $$P(\la)_n \cong \Ind_{S_\la\times S_{n-|\la|}}^{\mathfrak{S}_n}\left(V_\la\right)
\cong \bigoplus_{\mu\in\Lambda_\la} V_{\mu[n]},$$
where $\Lambda_\la := \{\mu\mid \la_i\geq \mu_i\geq \la_{i+1}\;\text{for all $i$}\}$, and $S_{n-|\la|}$ acts trivially on $V_\la$.  
Since $|\mu|\leq |\la|$ and $\mu_1\leq \la_1$ for all $\mu\in\Lambda_\la$, with equality when $\mu=\la$,
this implies that $P(\la)$ stabilizes sharply at $|\la|+\la_1$.

Church--Ellenberg--Farb prove that tensor products of free modules are free, and we can therefore write \cite[Equation (17)]{CEF}
$$P(\la)\otimes P(\mu) \cong \bigoplus_\nu P(\nu)^{\oplus d_{\la\mu}^{\nu}}.$$
Theorem \ref{tensor} implies that \begin{equation}\label{d}|\nu|+\nu_1 \leq |\la|+\la_1 + |\mu|+\mu_1\end{equation} 
whenever $d_{\la\mu}^{\nu} \neq 0$.

When $|\la| = |\mu| = |\nu|$, $d_{\la\mu}^{\nu}$ is the Kronecker coefficient that measures the multiplicity of $V_\nu$ in $V_\la\otimes V_\mu$,
and Equation \eqref{d} is trivial.
When $|\la|+|\mu| = |\nu|$, $d_{\la\mu}^{\nu}$ is the Littlewood--Richardson coefficient that measures the multiplicity of $V_\nu$ in 
$\Ind_{S_{|\la|}\times S_{|\mu|}}^{S_{|\nu|}}\Big(V_\la\boxtimes V_\mu\Big)$, and Equation \eqref{d}
follows from the interpretation of $d_{\la\mu}^{\nu}$ in terms of skew tableaux.  
For general $\la$, $\mu$, and $\nu$, we believe that Equation \eqref{d} was not previously known.
\end{remark}

\begin{example}\label{AC}
Let $A^i$ and $C^i$ be the $\FB$-modules that take $n$ to $A^i_n$ and $C^i_n$, respectively.
Hersh and Reiner \cite[Theorem 1.1]{HershReiner} prove that $A^i$ stabilizes at $3i+1$ and $C^i$ stabilizes at $3i$.
Both extend to free $\FI$-modules, so this is equivalent to the statement that, for each summand $P(\la)$ of $A^i$ (respectively $C^i$),
$|\la|+\la_1$ is less than or equal to $3i+1$ (respectively $3i$).
\end{example}

\begin{example}\label{B}
Let $B^i$ be the $\FB$-module that take $n$ to $B^i_n$.
Equation \eqref{AB} says that $A_n \cong B_n\otimes \mathcal{E}_\Q[t]$, and therefore $A^i \cong B^i \oplus B^{i-1}$.
Thus $B^i$ stabilizes at $3i+1$ by an inductive argument involving Lemma \ref{sums} and Example \ref{AC}.
\end{example}

\begin{example}\label{D}
Let $W_n := W_n^0 \oplus W_n^1$,
where, $W_n^0 = V_{[n]}$ is the $1$-dimensional trivial representation of $\mathfrak{S}_n$, and $W_n^1 = V_{[n-1,1]}$ is the standard representation.  There exists an isomorphism of graded $\mathfrak{S}_n$-representations \cite[Proposition 2.5]{MPY}
\begin{equation}\label{CD}C_n \cong D_n \otimes W_n.\end{equation}
Let $D^i$ be the $\FB$-module that takes $n$ to $D^i_n$, and let $W^1$ be the $\FB$-module that takes $n$ to $W^1_n \cong V_{[n-1,1]}$.
Equation \eqref{CD} gives
$C^i \cong D^i \oplus (D^{i-1} \otimes W^1).$
Note that $W^1$ stabilizes at $2$, thus $D^i$ stabilizes at $3i$ by an inductive argument involving Lemma \ref{sums}, Theorem \ref{tensor}, 
and Example \ref{AC}.
\end{example}

The following proposition follows from the deep result \cite[Theorem 4.1.5]{CEF}, which provides an equivalence between the category of
$\FI^\#$-modules and the category of $\FB$-modules.

\begin{proposition}\label{free}
Suppose that $P$ is an $\FI^\#$-module that stabilizes at $d$.  Then any $\FI^\#$-submodule or $\FI^\#$-quotient module
of $P$ also stabilizes at $d$.
\end{proposition}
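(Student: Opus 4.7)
The plan is to invoke the CEF equivalence $\FI^\#\text{-mod}\simeq\FB\text{-mod}$ from \cite[Theorem 4.1.5]{CEF} to reduce the statement to semisimple bookkeeping. The target category $\FB\text{-mod}$ is a product of the semisimple categories $\Q[\mathfrak{S}_n]\text{-mod}$ and is therefore itself semisimple; under the equivalence, $\FI^\#\text{-mod}$ inherits this property. Its simple objects are indexed by partitions: for each partition $\la$ there is a simple $\FI^\#$-module $M(\la)$ whose underlying $\FI$-module is the free $\FI$-module $P(\la)$ defined earlier in this section.

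Next I would unpack the stabilization data of each simple. From the formula for $P(\la)_n$ in the remark preceding Example \ref{AC}, for $n$ sufficiently large we have
$$M(\la)_n \cong \bigoplus_{\mu\in\Lambda_\la} V_{\mu[n]}, \qquad \Lambda_\la = \{\mu\mid \la_i\geq \mu_i\geq \la_{i+1}\}.$$
Every $\mu\in\Lambda_\la$ satisfies $\mu_i\leq\la_i$ for all $i$, hence $|\mu|+\mu_1\leq |\la|+\la_1$, with equality when $\mu=\la$, so $M(\la)$ stabilizes sharply at $|\la|+\la_1$. Writing a general $\FI^\#$-module $P$ as $\bigoplus_\la M(\la)^{\oplus m_\la}$ by semisimplicity, it follows that $P$ stabilizes at $d$ if and only if $m_\la = 0$ for every $\la\notin\Lambda_d$.

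The conclusion is then immediate. By semisimplicity, any $\FI^\#$-submodule of $P$ is isomorphic to some $\bigoplus_\la M(\la)^{\oplus m'_\la}$ with $m'_\la\leq m_\la$ for every $\la$, and likewise any $\FI^\#$-quotient of $P$ has multiplicities bounded above by those of $P$. The vanishing condition $m_\la=0$ for $\la\notin\Lambda_d$ is preserved under lowering multiplicities, so any $\FI^\#$-submodule or quotient of $P$ also stabilizes at $d$. The only nontrivial input is \cite[Theorem 4.1.5]{CEF} itself; granting it, the identification of the simples of $\FI^\#\text{-mod}$ with the $P(\la)$'s and the semisimplicity of the category are direct consequences, and the rest of the argument is purely formal. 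The one subtlety worth flagging is that the analogous claim fails for $\FB$-submodules of a stable $\FB$-module — one really needs the $\FI^\#$-structure to force the submodule to split off as a direct summand of simples of the same type.
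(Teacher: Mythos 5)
Your proof is correct and follows essentially the same route as the paper's: both arguments rest on \cite[Theorem 4.1.5]{CEF} to decompose any $\FI^\#$-module into copies of the modules $P(\la)$ (your $M(\la)$), record that $P$ stabilizing at $d$ forces the summands to lie in $\Lambda_d$, and then observe that sub- and quotient modules can only have smaller multiplicities. You make explicit the semisimplicity of $\FI^\#$-mod that the paper leaves implicit in the phrase ``the result implies that any $\FI^\#$-submodule or quotient module of $P$ is isomorphic to $\bigoplus P(\la)^{\oplus s_\la}$ with $s_\la \le r_\la$,'' which is a helpful unpacking but not a different argument; your closing remark correctly flags the reason the $\FI^\#$-structure (rather than merely the $\FB$-structure) is needed.
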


\begin{proof}
By \cite[Theorem 4.1.5]{CEF}, any $\FI^\#$-module is free as an $\FI$-module.  Since $P$ stabilizes at $d$, this means that
we have a collection of natural numbers $\{r_\la\mid\la\in\Lambda_d\}$
and an isomorphism $$P \cong \bigoplus_{\la\in\Lambda_d} P(\la)^{\oplus r_\la}.$$
Furthermore, the result \cite[Theorem 4.1.5]{CEF} implies that
any $\FI^\#$-submodule or $\FI^\#$-quotient module of $P$
is isomorphic to $$\bigoplus_{\la\in\Lambda_d} P(\la)^{\oplus s_\la}$$
for some collection of multiplicities $\{s_\la\mid\la\in\Lambda_d\}$ with $s_\la\leq r_\la$ for all $\la\in\Lambda_d$.
In particular, such a submodule or quotient module stabilizes at $d$.
\end{proof}

\begin{example}\label{T}
Let $T_n = \Q[z_1,\ldots,z_n]$ be the polynomial ring in $n$ variables, and let $T^i$ be the $\FB$-module that takes $n$ to $T_n^i$.
The $\FB$-module structure on $T^i$ extends canonically to an $\FI^\#$-module structure, where a partially defined inclusion $\varphi$
from $[m]$ to $[n]$ sends $z_i$ to $z_{\varphi(i)}$ if $\varphi(i)$ is defined and to $0$ otherwise.
The module $T^1$ is the free module associated with the partition $[1]$, and therefore stabilizes at $2$.
Since $T^i$ is an $\FI^\#$-quotient module of the tensor power $(T^1)^{\otimes i}$, 
Theorem \ref{tensor} and Proposition \ref{free} together imply that $T^i$ stabilizes at $2i$.
\end{example}

\begin{example}\label{R}
Let $R_n = \Q[z_1,\ldots,z_n]/\langle z_1+\cdots +z_n\rangle$, and let 
$R^i$ be the $\FB$-module that takes $n$ to $R^i_n$.  
Consider the polynomial ring $\Q[t]$ with trivial $S_n$-action, along with the
$S_n$-equivariant isomorphism $T_n \cong R_n \otimes \Q[t]$
that sends $z_i$ to $z_i\otimes 1 + 1\otimes t$.
This isomorphism shows that \begin{equation}\label{OKreferee}T^i \cong \bigoplus_{0\leq j\leq i} R^j.\end{equation}
We claim that $R^i$ stabilizes at $2i$.  Indeed, if we assume that it holds for all $j<i$, then it holds for $i$ by 
Lemma \ref{sums}, Example \ref{T}, and Equation \eqref{OKreferee}.
\end{example}

\begin{example}\label{OT}
Let $OT_n$ be the Orlik--Terao algebra of the Coxeter arrangement associated with $\mathfrak{sl}_n$.  By definition, this is the subalgebra
of rational functions in the variables $y_1,\ldots,y_n$ generated by the functions $x_{jk} = \frac{1}{y_j-y_k}$.  This ring admits a grading
with $\deg x_{jk} = 1$, and we let $OT^i$ be the $\FB$-module that takes $n$ to $OT^i_n$.
One can see from the explicit presentation in Theorem \ref{OT-pres} that 
the $\FB$-module structure on $OT^i$ extends canonically to an $\FI^\#$-module structure, where a partially defined inclusion $\varphi$
from $[m]$ to $[n]$ sends $x_{jk}$ to $x_{\varphi(j)\varphi(k)}$ if $\varphi(j)$ and $\varphi(k)$ are both defined and to $0$ otherwise.
Remark \ref{AOTC} tells us that $OT^1 \cong C^1$, which stabilizes at $3$ by Example \ref{AC}.  
Since $OT^i$ is an $\FI^\#$-quotient module of the tensor power $(OT^1)^{\otimes i}$, Theorem \ref{tensor} and Proposition \ref{free}
together imply that $OT^i$ stabilizes at $3i$.
\end{example}

\begin{example}\label{M}
Let $M^i$ be the $\FB$-module that takes $n$ to $M^i_n$.  
There exists an isomorphism of graded $\mathfrak{S}_n$-representations \cite[Section 2.1]{MPY}
\begin{equation}\label{MOT}OT_n \cong R_n \otimes M_n,\end{equation} and therefore
$$OT^i \cong \bigoplus_{j+k=i} R^j \otimes M^k.$$
Thus $M^i$ stabilizes at $3i$ by an inductive argument involving
Lemma \ref{sums}, Theorem \ref{tensor}, and Examples \ref{R} and \ref{OT}.
\end{example}

\begin{remark}
The isomorphisms in Equations \eqref{CD} and \eqref{MOT} are not canonical, nor are they isomorphisms of algebras.
Indeed, each one is proved by constructing a spectral sequence that degenerates because all cohomology groups involved vanish in odd degrees.
\end{remark}

\begin{remark}
The $\FB$-modules $R^i$ and $M^i$ do not extend canonically to $\FI$-modules (rather, they extend canonically to 
$\FI^{\operatorname{op}}$-modules).  On the other hand, Remark \ref{FIfg} along with Examples \ref{R} and \ref{M}
together imply that both $R^i$ and $M^i$ do admit (perhaps noncanonical) extensions to $\FI$-modules.
In the case of $R^i$, this can be seen by identifying $R_n$ with the subalgebra of $T_n$ generated by the elements $z_j-z_k$.
In the case of $M^i$, it can be regarded as evidence for Conjecture~\ref{mpy}.
\end{remark}

\begin{remark}
Conjecture \ref{elc} says that, for all $i\leq j\leq k\leq l$ with $j+k=i+l$, $A^i\otimes A^l$ is isomorphic to a sub-$\FB$-module of $A^j\otimes A^k$,
or equivalently a quotient $\FB$-module, since the category of $\FB$-modules is semisimple.
A much stronger conjecture would be that $A^i\otimes A^l$ is isomorphic to a quotient $\FI$-module of $A^j\otimes A^k$.
This is indeed true when $i=0$, as the multiplication map
$$A^j\otimes A^k\to A^{j+k} \cong A^0\otimes A^{j+k}$$
is surjective.  However, the fact that we know of no natural map from
$A^j\otimes A^k$ to $A^i\otimes A^l$ when $i>0$ leads us to doubt that this stronger conjecture holds.
The same remark applies with $A$ replaced by $B$, $C$, or $D$, or by $M$ in the category of
$\FI^{\operatorname{op}}$-modules.
\end{remark}

\section{Proofs}

In this section, we describe our computer assisted proofs of Theorems \ref{mpy-low} and \ref{elc-low}.
For computational purposes, one can explicitly obtain $A_n^i$ and $C_n^i$ using 
\cite[Equations (25) and (26), Theorem 2.7, and Section 2.7]{HershReiner}, and then obtain $B_n^i$ and $D_n^i$
using Equations \eqref{AB} and \eqref{CD}.
Our calculations of the representation $M_n$ rely on the recursive formula \cite[Theorem 3.2]{MPY}, 
which is derived using a canonical stratification of the hypertoric variety $X_n$.  This calculation is much more computationally
intensive than the ones used to compute $A_n$, $B_n$, $C_n$, and $D_n$, which is why the statement of Theorem \ref{elc-low}
is weaker for $M_n$ than for the other four graded representations.
The computer code used in this paper can be found at \url{https://github.com/jacobmatherne/ELCandRS}.

\begin{proof}[Proof of Theorem \ref{mpy-low}]
By Lemma \ref{enough}(1), Example \ref{D}, and Example \ref{M}, it is sufficient to check that $D_n\cong M_n$ for all $n\leq 21$.
We have performed these checks using SageMath.
\end{proof}

\begin{remark}
In fact, we checked Conjecture \ref{mpy} for all $n\leq 22$.  Thus the first unknown statement of Conjecture \ref{mpy}
is that $D_{23}^8$ is isomorphic to $M_{23}^8$.
Conjecture \ref{mpy} had previously only been checked for all $n\leq 10$ \cite[Remark 2.11]{MPY}.
\end{remark}

\begin{proof}[Proof of Theorem \ref{elc-low}]
We begin with $B_n$.  We need to show that, for all $i\leq j\leq k\leq l$ with $j+k=i+l=m\leq 14$, $B_n^i\otimes B_n^l$ is isomorphic
to a subrepresentation of $B_n^j\otimes B_n^k$.  By Theorem \ref{tensor} and Example \ref{B}, both $B^i\otimes B^l$ and
$B^j\otimes B^k$ stabilize at $3m+2$.  Thus, by Lemma \ref{enough}(2), it is sufficient to check that 
$B_n^i\otimes B_n^l$ is isomorphic to a subrepresentation of $B_n^j\otimes B_n^k$ for all $n\leq 3m+2$.
The situation for $D_n$ is identical, except this time Example \ref{D} tells us that stabilization occurs at $3m$ rather than $3m+2$.
We have performed these checks for $B_n$ and $D_n$ using SageMath.

The statement for $A_n$ follows from the statement for $B_n$ using Equation \eqref{AB} and Proposition \ref{tensor-elc}.
The statement for $C_n$ follows from the statement for $D_n$ using Equation \eqref{CD} and Proposition \ref{tensor-elc}.

Finally, the statement for $M_n$ nearly follows from the statement for $D_n$ using Theorem \ref{mpy-low}.  The one part
that does not follow is the assertion that $M_n^0\otimes M_n^8$ is isomorphic to a subrepresentation of $M_n^1\otimes M_n^7$, since we do not
know that $M_n^8$ is isomorphic to $D_n^8$.  However, $M_n$ is generated in degree $1$ by Theorem \ref{M-pres}, hence we have
a surjection
$$M_n^1\otimes M_n^7\to M_n^8\cong M_n^0\otimes M_n^8.$$
This tells us that $M_n^0\otimes M_n^8$ is isomorphic to a quotient of $M_n^1\otimes M_n^7$, and therefore also a subrepresentation
by semisimplicity of the category of representations of $\mathfrak{S}_n$.
\end{proof}

\begin{remark}
By Theorem \ref{elc-low}, the weak equivariant log concavity statement that $A_n^{i-1}\otimes A_n^{i+1}$ is isomorphic to a subrepresentation
of $A_n^i\otimes A_n^i$ holds for all $i\leq 7$, and similarly for $B_n$, $C_n$, and $D_n$.  Combining this result with Theorem \ref{mpy-low},
the statement  that $M^{i-1}\otimes M^{i+1}$ is isomorphic to a subrepresentation
of $M^i\otimes M^i$ holds for all $i\leq 6$.
\end{remark}

\appendix
\section{Appendix:  Presentations}
In this section, we give explicit presentations of each of the rings that we consider in this paper.
Most of the results in this appendix are well known, with the exception of Theorem \ref{two}.  Theorem \ref{two}
can be deduced from the proof of \cite[Theorem 3]{EarlyReiner}, but we include a proof here for completeness.

We begin with the ring $A_n$, which was first computed by Arnol'd \cite{arnold}.

\begin{theorem}\label{OSA}
There exists an isomorphism $A_n \cong \mathcal{E}_\Q[x_{ij}]/\mathcal{I}^A_n$, where $\mathcal{E}_\Q[x_{ij}]$
is the exterior algebra with generators $x_{ij}$ for all distinct $i, j\in[n]$ and $\mathcal{I}^A_n$ is the ideal generated by the following
families of relations:
\begin{itemize}
\item $x_{ij} - x_{ji}$ for all $i, j$ distinct
\item $x_{ij}x_{jk} + x_{jk}x_{ki} + x_{ki}x_{ij}$ for all $i,j,k$ distinct.
\end{itemize}
The group $\mathfrak{S}_n$ acts by permuting the indices.
\end{theorem}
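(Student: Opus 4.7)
The plan is to follow Arnold's original approach: exhibit explicit closed 1-forms representing the generators, verify the relations hold geometrically, and then match dimensions. Concretely, on $\Conf(n,\C)$ I consider the logarithmic 1-forms
$$\omega_{ij} := \frac{1}{2\pi\sqrt{-1}} \cdot \frac{d(z_i - z_j)}{z_i - z_j}$$
for each distinct $i, j \in [n]$. These are manifestly closed, $\mathfrak{S}_n$-equivariant under permutation of indices, and symmetric under $i \leftrightarrow j$, so the assignment $x_{ij} \mapsto [\omega_{ij}]$ respects the first family of relations. A short local computation on each triple gives the Arnold relation $\omega_{ij}\wedge\omega_{jk} + \omega_{jk}\wedge\omega_{ki} + \omega_{ki}\wedge\omega_{ij} = 0$, which reduces to the elementary identity satisfied by $du/u, dv/v, dw/w$ when $u + v + w = 0$. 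This produces a graded $\mathfrak{S}_n$-equivariant algebra homomorphism $\varphi: \mathcal{E}_\Q[x_{ij}]/\mathcal{I}^A_n \to A_n$.

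To show $\varphi$ is an isomorphism, I would induct on $n$ using the forgetful map $\pi: \Conf(n,\C) \to \Conf(n-1,\C)$, which is a locally trivial fiber bundle with fiber $\C$ minus $n-1$ points. The restrictions of $\omega_{1n}, \ldots, \omega_{n-1,n}$ form a basis of the first cohomology of the fiber; since these classes extend globally on the total space, the Leray--Hirsch theorem yields a splitting of $A_n$ as a free $A_{n-1}$-module on monomials in the classes $[\omega_{in}]$ for $1 \le i \le n-1$. This simultaneously proves $\varphi$ is surjective (every cohomology class lifts to a product of $\omega$'s by induction on $n$) and computes the Poincaré polynomial $\prod_{k=1}^{n-1}(1 + kt)$.

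For the reverse inequality on dimensions, I would produce a spanning set for $\mathcal{E}_\Q[x_{ij}]/\mathcal{I}^A_n$ of matching size. Fix a total order on the unordered pairs $\{i,j\}$; applying the Arnold relation around the smallest edge of any cycle appearing in the support graph of a monomial expresses it as a signed sum of monomials whose support graphs have strictly fewer broken circuits. Iterating, one arrives at a spanning set indexed by no-broken-circuit forests on $[n]$, which a direct count shows has generating function $\prod_{k=1}^{n-1}(1 + kt)$. Comparing with the topological count forces $\varphi$ to be an isomorphism.

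The main obstacle is the combinatorial reduction in the last paragraph: one must argue carefully that repeated application of the Arnold relations terminates with nbc-forest monomials and does not loop. The cleanest way to sidestep this is to invoke the general Orlik--Solomon theorem for complex hyperplane arrangement complements, which furnishes precisely the claimed presentation when specialized to the braid arrangement, with the dimension count provided by Zaslavsky's theorem applied to the characteristic polynomial of the partition lattice.
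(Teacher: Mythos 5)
The paper does not actually prove this statement; it simply cites Arnol'd \cite{arnold} and records the presentation. Your outline reconstructs the classical argument correctly: the logarithmic forms $\omega_{ij}$ give an $\mathfrak{S}_n$-equivariant ring homomorphism $\varphi$ (the check that $\omega_{ij}=\omega_{ji}$ and the Arnold three-term relation holds with $u+v+w=0$ is exactly right); the Fadell--Neuwirth fibration plus Leray--Hirsch give the Poincar\'e polynomial $\prod_{k=1}^{n-1}(1+kt)$ and surjectivity of $\varphi$; and the nbc-forest count on the other side forces injectivity. Two small remarks. First, your Leray--Hirsch basis consists only of $1$ and the $n-1$ classes $[\omega_{in}]$, since the fiber $\C$ minus $n-1$ points has no cohomology above degree $1$; the phrase ``monomials in the classes $[\omega_{in}]$'' suggests a larger set than is actually there, so it is cleaner to say the basis is $\{1,[\omega_{1n}],\ldots,[\omega_{n-1,n}]\}$. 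Second, you are right that the delicate point is termination of the broken-circuit rewriting, and that it is standard to outsource this either to Bj\"orner's nbc-basis theorem for Orlik--Solomon algebras or to the general Orlik--Solomon theorem for arrangement complements; since the paper itself only cites the result, invoking the general theorem is entirely in the spirit of what is expected here. In short, your proposal is a correct reconstruction of the proof that the paper delegates to the literature.
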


The isomorphism inverse to that of Equation \eqref{AB} sends $t$ to $\sum_{i\neq j} x_{ij}$, thus Theorem \ref{OSA} has the following 
corollary.

\begin{corollary}\label{B-pres}
There exists an $\mathfrak{S}_n$-equivariant isomorphism $B_n \cong \mathcal{E}_\Q[x_{ij}]/\mathcal{I}^B_n$, where
$$\mathcal{I}^B_n := \mathcal{I}^A_n + \left\langle \sum_{i\neq j} x_{ij}\right\rangle.$$
\end{corollary}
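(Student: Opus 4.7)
The plan is to realize $B_n$ as the quotient of $A_n$ by a single $\mathfrak{S}_n$-invariant degree-one relation, using the $\mathfrak{S}_n$-equivariant splitting invoked in the paragraph immediately before the corollary, and then translate the presentation of $A_n$ into one for $B_n$ by adjoining that relation to $\mathcal{I}^A_n$.

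By the canonical isomorphism $A_n \cong B_n \otimes \mathcal{E}_\Q[t]$ of Equation \eqref{AB}, made $\mathfrak{S}_n$-equivariant as stated in the text, there is an $\mathfrak{S}_n$-invariant element $t \in A_n^1$ and a direct sum decomposition $A_n = B_n \oplus t \cdot B_n$ of $\mathfrak{S}_n$-representations. Because $t^2 = 0$, the summand $t \cdot B_n$ coincides with the principal ideal $(t) \subset A_n$ (indeed $tA_n = tB_n \oplus t^2B_n = tB_n$), so the splitting yields an $\mathfrak{S}_n$-equivariant algebra isomorphism $B_n \cong A_n/(t)$.

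Next I would pin down $t$ up to nonzero scalar. The space $A_n^1$ is spanned by $\{x_{ij}\}_{i<j}$, on which $\mathfrak{S}_n$ acts as the permutation representation on two-element subsets of $[n]$; its invariant subspace is the line spanned by $\sigma := \sum_{i<j} x_{ij}$. On the other hand, $B_n^1$, being the span of differences $x_{ij}-x_{kl}$, is the augmentation kernel of $A_n^1$ and therefore contains no nonzero $\mathfrak{S}_n$-invariants. Hence $\Q\sigma$ is an $\mathfrak{S}_n$-equivariant complement of $B_n^1$ in $A_n^1$, forcing $t \in \Q^\times \cdot \sigma$. Using the relation $x_{ij}=x_{ji}$ already imposed by $\mathcal{I}^A_n$, we have $\sum_{i\neq j} x_{ij} = 2\sigma$, so the ideals $(t) = (\sigma) = \bigl\langle\sum_{i\neq j} x_{ij}\bigr\rangle$ in $A_n$ all coincide.

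Combining these observations with the presentation of $A_n$ from the preceding theorem yields
$$B_n \;\cong\; A_n/(t) \;\cong\; \mathcal{E}_\Q[x_{ij}] \big/ \bigl(\mathcal{I}^A_n + \bigl\langle{\textstyle\sum_{i\neq j} x_{ij}}\bigr\rangle\bigr) \;=\; \mathcal{E}_\Q[x_{ij}]/\mathcal{I}^B_n$$
$\mathfrak{S}_n$-equivariantly, as desired. The only substantive input is the canonical $\mathfrak{S}_n$-equivariant isomorphism $A_n \cong B_n \otimes \mathcal{E}_\Q[t]$ supplied by Yuzvinsky and the preceding paragraph; beyond that, the argument reduces to the elementary identification of the one-dimensional invariant line in $A_n^1$. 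The only thing that could go wrong is that no $t$ is simultaneously $\mathfrak{S}_n$-invariant and complementary to $B_n^1$, but this is ruled out immediately by $\dim(A_n^1)^{\mathfrak{S}_n} = 1$ together with $(B_n^1)^{\mathfrak{S}_n} = 0$.
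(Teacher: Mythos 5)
Your proof is correct and fleshes out what the paper leaves as a one-sentence justification preceding the corollary, namely that $B_n$ is the subalgebra of differences in $A_n$ and the inclusion splits $\mathfrak{S}_n$-equivariantly. Your key steps---using the equivariant isomorphism $A_n \cong B_n \otimes \mathcal{E}_\Q[t]$ to write $B_n \cong A_n/(t)$, and then pinning down $t$ as a multiple of $\sum_{i\neq j} x_{ij}$ because that spans the unique $\mathfrak{S}_n$-invariant line in $A_n^1$ while $B_n^1$ has none---are exactly what the paper's terse remark implicitly invokes, so the approach is the same.
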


The ring $C_n$ was first computed by Cohen \cite{Cohen}; see alternatively \cite[Corollary 5.6]{dLS}.

\begin{theorem}\label{C-pres}
There exists an isomorphism $C_n \cong \Q[x_{ij}]/\mathcal{I}^C_n$, where $\Q[x_{ij}]$
is the polynomial ring with generators $x_{ij}$ for all distinct $i, j\in[n]$ and $\mathcal{I}^C_n$ is the ideal generated by the following
families of relations:
\begin{itemize}
\item $x_{ij} + x_{ji}$ for all $i, j$ distinct
\item $x_{ij}^2$ for all $i,j$ distinct
\item $x_{ij}x_{jk} + x_{jk}x_{ki} + x_{ki}x_{ij}$ for all $i,j,k$ distinct.
\end{itemize}
The group $\mathfrak{S}_n$ acts by permuting the indices.
\end{theorem}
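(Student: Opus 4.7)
The plan is to construct an $\mathfrak{S}_n$-equivariant surjection $\Phi \colon \Q[x_{ij}]/\mathcal{I}^C_n \to C_n$ from explicit geometric classes, and then to verify it is an isomorphism by matching Poincaré polynomials. Throughout I use the grading in which $\deg x_{ij} = 1$, which corresponds to cohomological degree $2$ in $\Conf(n, \R^3)$.

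For each ordered pair $i \neq j$ in $[n]$, let $\pi_{ij} \colon \Conf(n, \R^3) \to \Conf(2, \R^3)$ be the projection onto the $i$-th and $j$-th points. The normalized difference map $(z_i, z_j) \mapsto (z_i - z_j)/|z_i - z_j|$ identifies $\Conf(2, \R^3)$ homotopically with $S^2$, and I define $x_{ij} \in C_n^1 = H^2(\Conf(n, \R^3); \Q)$ to be the pullback along $\pi_{ij}$ of the orientation class of $S^2$. Swapping the two points corresponds to the antipodal map on $S^2$, which has degree $-1$, yielding the antisymmetry $x_{ij} + x_{ji} = 0$. Nilpotency $x_{ij}^2 = 0$ is immediate since $x_{ij}$ is the pullback of a class in $H^2(S^2; \Q)$ and $H^4(S^2; \Q) = 0$. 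The cubic Arnol'd-type relation is the content of the case $n = 3$: pulling back along $\pi_{ijk} \colon \Conf(n, \R^3) \to \Conf(3, \R^3)$, it suffices to verify it in $C_3$, which I can compute directly using the Fadell--Neuwirth fibration $\Conf(3, \R^3) \to \Conf(2, \R^3)$ with fiber $\R^3 \setminus \{2 \text{ points}\} \simeq S^2 \vee S^2$, identifying the 2-dimensional space $C_3^2$ and checking that the cyclic sum vanishes. These three verifications produce a well-defined graded ring map $\Phi$ that is $\mathfrak{S}_n$-equivariant by naturality.

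For the isomorphism, I will iterate the Fadell--Neuwirth fibration $p \colon \Conf(n, \R^3) \to \Conf(n-1, \R^3)$ forgetting the last point, whose fiber is homotopy equivalent to a wedge of $n-1$ copies of $S^2$. The classes $x_{1n}, \dots, x_{n-1,n}$ restrict to a basis of $H^2$ of the fiber, so Leray--Hirsch applies and the Serre spectral sequence collapses at $E_2$, presenting $C_n$ as a free module over $C_{n-1}$ on $\{1, x_{1n}, \dots, x_{n-1,n}\}$. This yields inductively the Poincaré polynomial $\prod_{k=1}^{n-1}(1 + kt)$ and moreover shows that the $x_{ij}$ generate $C_n$ as a ring, so $\Phi$ is surjective. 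On the algebraic side, I will fix a linear order on the edges $\{i,j\}$ of $K_n$ and use the three relations to rewrite every monomial in terms of ``no broken circuit'' (nbc) monomials for the braid matroid on $[n]$. Standard nbc combinatorics produces the same Poincaré polynomial $\prod_{k=1}^{n-1}(1+kt)$, so matching dimensions in each degree upgrades $\Phi$ to an isomorphism.

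The main obstacle will be the algebraic upper bound: showing that every product of generators reduces modulo $\mathcal{I}^C_n$ to an nbc monomial. The argument parallels the standard reduction in the Orlik--Solomon setting, but the polynomial (rather than exterior) framework requires the nilpotency relation $x_{ij}^2 = 0$ as an additional rewriting tool alongside the cubic relation, and one must be careful that sign conventions from the antisymmetry $x_{ji} = -x_{ij}$ are compatible. I expect a double induction on the total degree of a monomial and on the lexicographically largest broken circuit it contains to suffice: the cubic relation eliminates broken circuits one at a time, antisymmetry orients each factor consistently with the chosen edge order, and nilpotency absorbs repeated factors.
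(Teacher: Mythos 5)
The paper offers no proof of Theorem~\ref{C-pres}: it records the statement and cites Cohen \cite{Cohen} and de~Longueville--Schultz \cite[Corollary~5.6]{dLS}. Your proposal, by contrast, is a self-contained argument, so the comparison is really between your argument and the standard literature argument that the paper is invoking.

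The structure of your argument is sound. The Gauss-map classes $x_{ij}$, the degree~$-1$ of the antipodal map on $S^2$ giving $x_{ij}+x_{ji}=0$, nilpotency by pullback from $H^4(S^2)=0$, and the reduction of the cubic relation to the case $n=3$ by naturality are all correct. The Fadell--Neuwirth induction via Leray--Hirsch correctly gives the Poincar\'e polynomial $\prod_{k=1}^{n-1}(1+kt)$ and surjectivity of $\Phi$, and the overall plan of bounding the algebraic side by NBC monomials and matching dimensions is exactly what is needed. This is more explicit than what the paper does, at the cost of re-deriving facts that Cohen and Cordovil already established.

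The genuine gap is precisely the step you flag as ``the main obstacle,'' and it is not a routine detail. First, the relation list only contains the three-term Arnol'd relation for \emph{triangles}, but the braid matroid has circuits (cycles) of every length $\geq 3$. To carry out the NBC reduction you must show that the long-cycle relations are consequences of the triangle relations together with $x_{ij}^2=0$ and antisymmetry. In the exterior (Orlik--Solomon) setting this is classical via a chord-splitting induction; in the present commutative, square-zero setting it still works because $K_n$ is chordal, but the computation genuinely uses nilpotency to cancel the shared chord, and the signs are delicate. You gesture at this with ``one must be careful that sign conventions\ldots are compatible,'' but that is the whole content. Second, your proposed double induction on total degree and ``lexicographically largest broken circuit'' is only a plan, not an argument: one needs to verify that each rewriting step strictly decreases the chosen well-ordering, which is not automatic once squared factors and commuting variables enter. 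Both issues are exactly what Cordovil proves in \cite{Co} (this is his NBC basis theorem for the algebra $C(M)$), so the cleanest repair is to cite Cordovil for the spanning statement, or, if you want to stay self-contained, to actually carry out the chord-splitting lemma and exhibit the terminating rewriting order rather than asserting that one should exist.
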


We next give two equivariant presentations for $D_n$, neither of which has appeared before.

\begin{theorem}\label{two}
There exists an isomorphism $D_n\cong \Q[h_{ijk}]/\mathcal{I}^D_n$, where $\Q[h_{ijk}]$
is the polynomial ring with generators $h_{ijk}$ for distinct triples $i,j,k\in[n]$ and $\mathcal{I}^D_n$ is the ideal generated by the following
families of relations:
\begin{itemize}
\item $h_{ijk}+h_{jik}$ and $h_{ijk}+h_{ikj}$ for all $i,j,k$ distinct
\item $h_{ijk}-h_{ijl}+h_{ikl}-h_{jkl}$ for all $i,j,k,l$ distinct
\item $h_{ijk}^2$ for all $i,j,k$ distinct.
\end{itemize}
There also exists an isomorphism $D_n \cong \Q[x_{ij}]/\mathcal{J}^D_n$, where $\Q[x_{ij}]$
is the polynomial ring with generators $x_{ij}$ for all distinct $i, j\in[n]$ and $\mathcal{J}^D_n$ is the ideal generated by the following
families of relations:
\begin{itemize}
\item $x_{ij} + x_{ji}$ for all $i,j$ distinct
\item $\sum_{j\neq i} x_{ij}$ for all $i$
\item $(x_{ij} + x_{jk} + x_{ki})^2$ for all $i,j,k$ distinct.
\end{itemize}
In both cases, the group $\mathfrak{S}_n$ acts by permuting the indices.
\end{theorem}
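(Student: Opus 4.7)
The plan is to use the $\mathfrak{S}_{n-1}$-equivariant identification $D_n\cong C_{n-1}$ from Remark \ref{identify} together with Cohen's presentation (Theorem \ref{C-pres}) to establish the second presentation, then derive the first by the change of variables $h_{ijk}:=x_{ij}+x_{jk}+x_{ki}$.

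First, I will fix $1\in[n]$ so that $\mathfrak{S}_{n-1}\subset\mathfrak{S}_n$ acts on $\{2,\ldots,n\}$. Writing Cohen's generators of $C_{n-1}$ as $y_{ij}$ for $i,j\in\{2,\ldots,n\}$, I will set $x_{ij}:=y_{ij}$ for $i,j\geq 2$ and introduce $x_{1j}:=-\sum_{k\neq 1,j}y_{kj}$, $x_{j1}:=-x_{1j}$. With these definitions, the antisymmetry and row-sum relations of $\mathcal{J}^D_n$ hold by construction. The square relation $(x_{ij}+x_{jk}+x_{ki})^2=0$ in the case $1\notin\{i,j,k\}$ reduces to $(y_{ij}+y_{jk}+y_{ki})^2=2(y_{ij}y_{jk}+y_{jk}y_{ki}+y_{ki}y_{ij})=0$ via the $y_{ab}^2=0$ and Arnold relations from Cohen's presentation. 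The case $1\in\{i,j,k\}$ is a lengthier but still direct cohomological computation, expanding $x_{1j}+x_{jk}+x_{k1}$ into $y$'s and applying the Arnold relation repeatedly.

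Second, I will derive the first presentation algebraically from the second. The map $\psi(h_{ijk}):=x_{ij}+x_{jk}+x_{ki}$ sends each defining relation of $\mathcal{I}^D_n$ to zero: the antisymmetries come from $x_{ij}=-x_{ji}$, the four-term relation cancels pairwise upon expansion, and the square relation is tautological. For the inverse, I will define $\phi(x_{ij}):=\frac{1}{n}\sum_{k\neq i,j}h_{ijk}$ and verify that $\phi$ respects all of $\mathcal{J}^D_n$; the antisymmetry and row-sum relations follow directly from antisymmetry of $h$ in any two indices. The key identity $\phi(x_{ij}+x_{jk}+x_{ki})=h_{ijk}$ uses the four-term relation to telescope the triple sum over $l\notin\{i,j,k\}$ (each $h_{ijl}+h_{jkl}+h_{kil}$ rewriting as $h_{ijk}$), with the three cyclic ``diagonal'' contributions (where $l$ equals one of $i,j,k$) combining to give $3h_{ijk}$, for a total of $n\cdot h_{ijk}$.

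The hard part will be showing that the classes $x_{1j}$ defined above are equivariantly correct for the full $\mathfrak{S}_n$-action, not only the $\mathfrak{S}_{n-1}$-action inherited from Cohen, and that the listed relations exhaust $D_n$. I would address the former by identifying $y_{ij}$ geometrically as the pullback of the orientation class of $S^2$ along a Gauss-type direction map $\Conf(n,SU_2)/SU_2\to S^2$, so that the row-sum formula for $x_{1j}$ reflects the compactness of $SU_2\cong S^3$ (in a compact manifold, the sum of linking classes with a varying point must vanish). For completeness of the relations, a Poincar\'e-series comparison suffices: the presented ring surjects onto $D_n$, and its dimension is bounded above by a broken-circuit enumeration for the oriented braid matroid on $n-1$ elements, which matches $\dim D_n=\dim C_{n-1}$. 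The overall strategy parallels \cite[Theorem 3]{EarlyReiner}, which treats an analogous situation and should adapt here with only bookkeeping changes.
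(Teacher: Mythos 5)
Your route genuinely differs from the paper's. The paper begins with the $h_{ijk}$-presentation and gets the $\mathfrak{S}_n$-equivariant surjection $\Q[h_{ijk}]/\mathcal{I}^D_n \to D_n$ essentially for free: the cyclic Heaviside functions $h_{ijk}$ on $\Conf(n,U_1)/U_1$ are defined symmetrically in all $n$ indices, and \cite[Remark 2.9]{MPY} identifies $D_n$ with the associated graded of the resulting filtration. Symmetry is only broken at the very end, to check injectivity via the $\mathfrak{S}_{n-1}$-equivariant isomorphism $D_n\cong C_{n-1}$. You instead start from the $x_{ij}$-presentation and Cohen's model of $C_{n-1}$, defining $x_{1j}$ by a formula, and then try to recover the first presentation by the change of variables $\psi(h_{ijk})=x_{ij}+x_{jk}+x_{ki}$, $\phi(x_{ij})=\tfrac{1}{n}\sum_{l\neq i,j}h_{ijl}$. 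That algebraic step is fine --- the telescoping of the four-term relation, the verification that $\phi$ kills antisymmetry and row-sum relations, and the identity $\phi(x_{ij}+x_{jk}+x_{ki})=h_{ijk}$ all check out.

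The genuine gap is exactly where you flag ``the hard part'': showing that your classes $x_{ij}$ (in particular $x_{1j}:=-\sum_{k\neq 1,j}y_{kj}$) are $\mathfrak{S}_n$-equivariantly the right ones. The isomorphism $D_n\cong C_{n-1}$ is only $\mathfrak{S}_{n-1}$-equivariant, so nothing in your construction a priori controls how a transposition moving the index $1$ acts on $x_{12},x_{13},\dots$; this is the entire content of the theorem beyond Cohen. Your proposed fix --- a Gauss-type direction map $\Conf(n,SU_2)/SU_2\to S^2$ together with a compactness argument for the vanishing of $\sum_j x_{ij}$ --- is the right circle of ideas, but you only gesture at it: you would need to actually construct the $\mathfrak{S}_n$-invariant classes $x_{ij}\in H^2$, prove the row-sum relation from compactness of $SU_2$, re-derive the square relations in that model, and then verify it agrees with the $\mathfrak{S}_{n-1}$-equivariant Cohen identification. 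The paper sidesteps all of this by letting the manifestly $\mathfrak{S}_n$-symmetric cyclic Heaviside filtration carry the equivariance, which is why it presents the $h_{ijk}$-presentation first and derives the $x_{ij}$-presentation second, rather than the other way around. If you want to pursue your route, the Gauss-map equivariance argument is the piece that needs to be written out carefully; as stated it is a sketch pointing to \cite{EarlyReiner} rather than a proof.
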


\begin{proof}
The space $\Conf(n,U_1)/U_1$ is a disjoint union of contractible subspaces indexed by cyclic orderings
of the set $[n]$.  For $i,j,k\in[n]$ distinct, consider the {\bf cyclic Heaviside function} $h_{ijk}$ that takes
the value $1$ on those components where $i$, $j$, and $k$ appear in a counterclockwise order, and $0$ on those
components where they appear in a clockwise order.  These functions generate the ring of locally constant functions on
$\Conf(n,U_1)/U_1$ and satisfy the following families of relations:
\begin{itemize}
\item $h_{ijk}+h_{jik} = 1 = h_{ijk}+h_{ikj}$ for all $i,j,k$ distinct
\item $h_{ijk}-h_{ijl}+h_{ikl}-h_{jkl} = 0$ for all $i,j,k,l$ distinct
\item $h_{ijk}^2 = h_{ijk}$ for all $i,j,k$ distinct.
\end{itemize}
We consider the filtration of the ring of locally constant functions on $\Conf(n,U_1)/U_1$
for which the $p^\text{th}$ filtered piece is the space of functions that can be expressed as polynomials of degree at
most $p$ in the cyclic Heaviside functions.
Using the fact that $\Conf(n,U_1)/U_1$ is homeomorphic to the fixed point set of the action of $U_1$ on $\Conf(n,SU_2)/SU_2$
by right translation,
one can show that the graded ring $D_n$ is $\mathfrak{S}_n$-equivariantly isomorphic to the associated graded of the ring of 
locally constant functions on $\Conf(n,U_1)/U_1$ with respect to the cyclic Heaviside filtration 
\cite[Remark 2.9]{MPY}. 
Passing to the associated graded turns the relations above into the generators of $\mathcal{I}^D_n$,
and we obtain
an $\mathfrak{S}_n$-equivariant surjective map $$\Q[h_{ijk}]/\mathcal{I}^D_n\to D_n.$$
To see that it is an isomorphism, we break symmetry and make use of the isomorphism $D_n\cong C_{n-1}$
of Remark \ref{identify} by reducing the problem to checking that the composition
$$\Q[h_{ijk}]/\mathcal{I}^D_n\to D_n\to C_{n-1}$$ is an isomorphism.  

The first and second families of generators of $\mathcal{I}^D_n$ imply that the degree $1$ part of $\Q[h_{ijk}]/\mathcal{I}^D_n$
is spanned by the generators $\{h_{ijn}\mid i\neq j\in [n-1]\}$, subject to the relations $h_{ijn}+h_{jin} = 0$.  The map from
$\Q[h_{ijk}]/\mathcal{I}^D_n$ to $C_{n-1}$ sends $h_{ijn}$ to $x_{ij}$, so we need to show that the third family of generators
of $\mathcal{I}^D_n$ corresponds to the second and third families of generators of $\mathcal{I}^C_n$.
Indeed, $h_{ijn}^2$ is sent to $x_{ij}^2$, and when $i,j,k\in[n-1]$, $h_{ijk}^2$ is sent to $2(x_{ij}x_{jk} + x_{jk}x_{ki} + x_{ki}x_{ij})$
plus elements of the ideal generated by the first two families of generators of $\mathcal{I}^C_n$.

Finally, consider the $\mathfrak{S}_n$-equivariant maps
$$\varphi:\Q[h_{ijk}]\to \Q[x_{ij}]\and
\psi:\Q[x_{ij}]\to \Q[h_{ijk}]$$
given by $$\varphi(h_{ijk})= x_{ij} + x_{jk} + x_{ki}\and \psi(x_{ij}) = \frac 1 n \sum_{k\notin\{i,j\}}h_{ijk}.$$
It is a simple calculation to check that $\varphi(\mathcal{I}^D_n) \subset \mathcal{J}^D_n$
and $\psi(\mathcal{J}^D_n) \subset \mathcal{I}^D_n$, so these maps descend to maps
$$\bar\varphi:\Q[h_{ijk}]/\mathcal{I}^D_n\to \Q[x_{ij}]/\mathcal{J}^D_n\and
\bar\psi:\Q[x_{ij}]/\mathcal{J}^D_n\to \Q[h_{ijk}]/\mathcal{I}^D_n.$$
To see that they are mutually inverse, we note that
\begin{eqnarray*}
\bar\psi\circ\bar\varphi(h_{ijk}) &=& \frac 1 n \sum_{p\notin\{i,j\}}h_{ijp} + \frac 1 n \sum_{q\notin\{j,k\}}h_{jkq} + \frac 1 n \sum_{r\notin\{k,i\}}h_{kir}\\
&=& \frac 1 n (h_{ijk} + h_{jki} + h_{kij}) + \frac 1 n \sum_{l\notin\{i,j,k\}} (h_{ijl} + h_{jkl} + h_{kil})\\
&=& \frac 3 n h_{ijk} + \frac 1 n \sum_{l\notin\{i,j,k\}} h_{ijk}\\
&=& h_{ijk},\end{eqnarray*}
and 
\begin{eqnarray*}
\bar\varphi\circ\bar\psi(x_{ij}) &=& \frac 1 n \sum_{k\notin\{i,j\}} (x_{ij} + x_{jk} + x_{ki})\\
&=& \frac 1 n \sum_{k\notin\{i,j\}} x_{ij} + \frac 1 n \sum_{k\notin\{i,j\}} x_{jk} + \frac 1 n \sum_{k\notin\{i,j\}} x_{ki}\\
&=& \frac{n-2}{n} x_{ij} - \frac 1 n x_{ji} - \frac 1 n x_{ji}\\
&=& x_{ij}.
\end{eqnarray*}
This completes the proof.
\end{proof}

The following presentation of $OT_n$ appears in \cite[Section 2.1]{MPY}, where it is proved using
\cite[Theorem 4]{PS} and \cite[Proposition 2.7]{ST-OT}.

\begin{theorem}\label{OT-pres}
There exists an isomorphism $OT_n \cong \Q[x_{ij}]/\mathcal{I}^{OT}_n$, where $\Q[x_{ij}]$
is the polynomial ring with generators $x_{ij}$ for all distinct $i, j\in[n]$ and $\mathcal{I}^{OT}_n$ is the ideal generated by the following
families of relations:
\begin{itemize}
\item $x_{ij} + x_{ji}$ for all $i, j$ distinct
\item $x_{ij}x_{jk} + x_{jk}x_{ki} + x_{ki}x_{ij}$ for all $i,j,k$ distinct.
\end{itemize}
The group $\mathfrak{S}_n$ acts by permuting the indices.
\end{theorem}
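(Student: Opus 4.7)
The plan is to construct the evident evaluation map
\[
\phi\colon \Q[x_{ij}]/\mathcal{I}^{OT}_n \longrightarrow OT_n, \qquad x_{ij}\longmapsto \frac{1}{y_i-y_j},
\]
which is manifestly $\mathfrak{S}_n$-equivariant, and show it is an isomorphism. First I would check that $\phi$ respects the two families of defining relations. The antisymmetry $x_{ij}+x_{ji}\mapsto 0$ is immediate. The triangle relation goes to zero by the partial-fraction identity
\[
\frac{1}{(y_i-y_j)(y_j-y_k)}+\frac{1}{(y_j-y_k)(y_k-y_i)}+\frac{1}{(y_k-y_i)(y_i-y_j)} = \frac{(y_k-y_i)+(y_i-y_j)+(y_j-y_k)}{(y_i-y_j)(y_j-y_k)(y_k-y_i)}=0,
\]
which is exactly the Orlik--Terao boundary of the triangle dependency among $y_i-y_j$, $y_j-y_k$, $y_k-y_i$. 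Surjectivity of $\phi$ is immediate from the definition of $OT_n$ as the subalgebra of rational functions generated by the $(y_i-y_j)^{-1}$.

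The substantive content is injectivity, for which I would appeal to the general presentation of Orlik--Terao algebras due to Terao (as formulated in \cite[Theorem 4]{PS}): for any central arrangement, $OT(\mathcal{A})$ is presented by generators $x_H$ indexed by hyperplanes modulo the ideal generated by the Orlik--Terao boundary relation $\partial_C := \sum_{H\in C} c_H \prod_{H'\in C\setminus\{H\}}x_{H'}$ attached to each circuit $C$ with minimal dependency $\sum_{H\in C} c_H\alpha_H=0$. For the braid arrangement the underlying matroid is the graphic matroid $M(K_n)$, whose circuits are precisely the cycles in the complete graph. The triangle relations in $\mathcal{I}^{OT}_n$ exactly exhaust the 3-cycle boundaries, so it remains to show that, modulo the antisymmetry relations, $\partial_C$ for a $k$-cycle $C$ with $k\geq 4$ lies in the ideal generated by triangle boundaries.

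I would do this by induction on cycle length. Given a cycle $C=(i_1,i_2,\ldots,i_k)$, introduce the chord $\{i_1,i_3\}$, which partitions $C$ into a triangle $T=(i_1,i_2,i_3)$ and a shorter cycle $C'=(i_1,i_3,i_4,\ldots,i_k)$. The plan is to exhibit an explicit identity of the form $\partial_C = m_T\cdot \partial_T \pm m_{C'}\cdot \partial_{C'}$ for suitable monomials $m_T,m_{C'}$ in the $x_{ij}$'s, reflecting the ``chord elimination'' of the dependency $\sum_\ell \alpha_{i_\ell i_{\ell+1}}=0$ by adding and subtracting $\alpha_{i_1 i_3}$; invoking \cite[Proposition 2.7]{ST-OT} encapsulates this reduction for graphic arrangements. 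The inductive step then completes injectivity.

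The main obstacle I expect is the bookkeeping in this chord reduction, especially tracking the signs introduced by the antisymmetry $x_{ji}=-x_{ij}$ and ensuring that the monomial prefactors multiply the correct terms. If this becomes unwieldy, a cleaner fallback is a Hilbert series comparison: both $OT_n$ and the presented ring have known Hilbert series given by the Poincar\'e polynomial $\prod_{i=1}^{n-1}(1+it)$ of the braid arrangement complement, the bound for the presented ring being obtained by writing down a spanning set indexed by no-broken-circuit sets of $M(K_n)$ using the triangle relations to rewrite monomials. Equality of Hilbert series then upgrades the surjection $\phi$ to an isomorphism without needing to explicitly reduce long-cycle boundaries.
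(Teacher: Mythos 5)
Your proposal matches the paper's approach: the theorem is cited to \cite[Section 2.1]{MPY}, which proves it via exactly the two inputs you invoke --- \cite[Theorem 4]{PS} for the circuit-boundary presentation of $OT(\mathcal{A})$, and \cite[Proposition 2.7]{ST-OT} for the reduction to three-element circuits in the graphic case. Two cautions on the details you sketch, however. In the chord-elimination step, the identity is not of the form $\partial_C = m_T\partial_T \pm m_{C'}\partial_{C'}$ with \emph{monomial} prefactors; the prefactors are sums of edge variables. For the $4$-cycle $C=(1,2,3,4)$ split along the chord $\{1,3\}$ into $T=(1,2,3)$ and $C'=(1,3,4)$, one checks directly that
$$\partial_C = (x_{34}+x_{41})\,\partial_T + (x_{12}+x_{23})\,\partial_{C'},$$
each prefactor being the sum of edge variables of the complementary arc of the cycle; this does not disturb the induction (and \cite[Proposition 2.7]{ST-OT} handles the bookkeeping in any case), but the ``monomial'' claim as written is false. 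More seriously, the Hilbert-series fallback would fail as stated: $OT_n$ is not Artinian --- it is Cohen--Macaulay of Krull dimension $n-1$, being free over the polynomial subring generated by the linear system of parameters --- so its Hilbert series is the rational function $\operatorname{Hilb}(M_n;t)/(1-t)^{n-1}$, not the polynomial $\prod_{i=1}^{n-1}(1+it)$. The latter is the Hilbert polynomial of the Artinian quotient $C_n = AOT_n$, not of $OT_n$ itself. A Hilbert-series comparison can still be made to work, but only by comparing against the Stanley--Reisner ring of the broken circuit complex as in \cite{PS}, not against the Poincar\'e polynomial of the complement.
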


As in Example \ref{R}, let $R_n = \Q[z_1,\ldots,z_n]/\langle z_1+\cdots +z_n\rangle$, with its natural grading and action of $\mathfrak{S}_n$.
We have an $\mathfrak{S}_n$-equivariant homomorphism $\varphi_n:R_n\to OT_n$ given by $\varphi(z_i) = \sum_{j\neq i} x_{ij}$,
which makes $OT_n$ into a graded $R_n$-module.  The following theorem says that the ring $M_n$ is isomorphic to the quotient of $OT_n$
by the ideal generated by the elements $\varphi(z_i)$ \cite[Section 2.1]{MPY}.

\begin{theorem}\label{M-pres}
There exists an $\mathfrak{S}_n$-equivariant isomorphism $M_n \cong \Q[x_{ij}]/\mathcal{I}^M_n$, where $\Q[x_{ij}]$
is the polynomial ring with generators $x_{ij}$ for all distinct $i, j\in[n]$ and 
$$\mathcal{I}^M_n = \mathcal{I}^{OT}_n + \big\langle \varphi(z_i)\mid i\in [n]\big\rangle.$$
\end{theorem}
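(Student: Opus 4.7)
The plan is to identify the elements $\varphi(z_i) \in OT_n$ with a spanning set for the canonical linear system of parameters on $OT_n$, after which the result follows immediately from the very definition of $M_n$ as the quotient of $OT_n$ by this linear system of parameters.

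First I would recall from \cite[Proposition 7]{PS} the general construction of the canonical linear system of parameters for the Orlik--Terao algebra of a central arrangement $\cA$ of hyperplanes in a vector space $V$. Writing $\alpha_H \in V^*$ for the defining linear form of $H \in \cA$, there is a canonical linear map $V \to OT(\cA)^1$ sending $v \mapsto \theta_v := \sum_{H \in \cA} \alpha_H(v)\, x_H$. This is well defined because reversing the orientation of a hyperplane negates both $\alpha_H$ and $x_H$, and its image is shown in \cite[Proposition 7]{PS} to form a linear system of parameters on $OT(\cA)$ whose span in $OT(\cA)^1$ is independent of choices.

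Next I would specialize to the $\mathfrak{sl}_n$ Coxeter arrangement. Here $V = \{y \in \Q^n : y_1 + \cdots + y_n = 0\}$, the hyperplanes are $H_{ij} = \{y_i = y_j\}$ with defining forms $\alpha_{ij} = y_i - y_j$, and $x_{ij} = -x_{ji}$ in $OT_n$ by Theorem \ref{OT-pres}. Using antisymmetry of both $\alpha_{ij}$ and $x_{ij}$ to symmetrize, for $v = (v_1,\ldots,v_n) \in V$ one computes
$$\theta_v \;=\; \tfrac{1}{2}\sum_{i \neq j} (v_i - v_j)\, x_{ij} \;=\; \sum_{i \neq j} v_i\, x_{ij} \;=\; \sum_{i=1}^n v_i \cdot \varphi(z_i).$$
Consequently the image of $V$ in $OT_n$ is exactly the span of $\{\varphi(z_1),\ldots,\varphi(z_n)\}$, which is $(n-1)$-dimensional in view of the relation $\sum_i \varphi(z_i) = 0$ coming from $\sum_{i\neq j} x_{ij} = 0$. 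This is in perfect agreement with the defining relation of $R_n$ and with the factorization of $\varphi_n : R_n \to OT_n$ through the quotient.

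Combining this identification with Theorem \ref{OT-pres} and the definition of $M_n$ yields the claimed presentation $M_n \cong \Q[x_{ij}]/(\mathcal{I}^{OT}_n + \langle \varphi(z_i) : i \in [n]\rangle)$. The only subtle point is matching the sign and orientation conventions of \cite{PS} with those of Theorem \ref{OT-pres}; the symmetrization step above is the concrete form this check takes, and I expect it to be the only step requiring actual calculation. Once that bookkeeping is done, no further work is needed.
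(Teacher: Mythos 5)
Your proof is correct, and it takes a route that is different from---and more elementary than---the one the paper sketches. The paper defines $M_n$ primarily as $IH^{2*}(X_n;\Q)$ and proves Theorem \ref{M-pres} geometrically, by citing \cite[Section 2.1]{MPY} and, as the subsequent remark explains, using \cite[Corollary 4.5]{TP08} to identify $OT_n$ with $IH^*_T(X_n)$ and $R_n$ with $H^*_T(\mathrm{pt})$; the module map $\varphi_n\colon R_n\to OT_n$ then emerges as the equivariant structure map, and killing positive-degree equivariant classes yields the presentation. You instead take as your starting point the alternative combinatorial characterization of $M_n$ stated in the introduction (quotient of $OT_n$ by the canonical l.s.o.p.\ of \cite[Proposition~7]{PS}), and you show by a direct symmetrization calculation that the image of $V$ under $v\mapsto\theta_v=\sum_H\alpha_H(v)x_H$ coincides with $\Span\{\varphi(z_1),\dots,\varphi(z_n)\}$. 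Your computation is right: $\theta_v=\tfrac12\sum_{i\neq j}(v_i-v_j)x_{ij}=\sum_i v_i\varphi(z_i)$ using $x_{ji}=-x_{ij}$, the relation $\sum_i\varphi(z_i)=0$ accounts for the drop to dimension $n-1$, and the sign conventions of \cite{PS} and Theorem~\ref{OT-pres} are compatible for exactly the reason you give. What your route buys is a short, purely combinatorial argument once the identification $M(\cA)\cong M_n$ is granted; what it costs is that it still implicitly leans on \cite{TP08} (or equivalently on the statement in the introduction and Section~\ref{sec:elc}) for that identification, whereas the paper's route establishes the presentation and the identification of the l.s.o.p.\ with $\{\varphi(z_i)\}$ simultaneously from the geometry and so does not need \cite{PS} at all.
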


\begin{remark}
Theorem \ref{M-pres} is proved by using \cite[Corollary 4.5]{TP08} to identify $OT_n$ with the torus equivariant intersection
cohomology of the hypertoric variety $X_n$, and $R_n$ with the torus equivariant cohomology of a point.
Because everything is concentrated in even degrees, the ordinary intersection cohomology is obtained
from the equivariant intersection cohomology by killing the action of the positive degree classes in the equivariant cohomology
of a point.
\end{remark}

\begin{remark}
Looking at Theorems \ref{C-pres} and \ref{OT-pres}, we see that
$C_n$ is isomorphic to the Artinian Orlik--Terao algebra of the Coxeter arrangement associated with $\mathfrak{sl}_n$,
as predicted by Remark \ref{AOTC}.
Geometrically, this reflects the fact that the locus of $X_n$ on which the torus acts freely has quotient space homeomorphic to $\Conf(n,\R^3)$,
and the map from $OT_n$ to $C_n$ may be identified with the restriction map in torus equivariant intersection cohomology.
\end{remark}

\begin{remark}
Consider the quotient of the polynomial ring $\Q[x_{ij},t]$ by the ideal generated by the following families of relations:
\begin{itemize}
\item $x_{ij} + x_{ji}$ for all $i, j$ distinct
\item $\sum_{j\neq i} x_{ij}$ for all $i$
\item $(x_{ij} + x_{jk} + x_{ki})^2 - t(x_{ij}^2 + x_{jk}^2 + x_{ki}^2)$ for all $i,j,k$ distinct.
\end{itemize}
If we specialize at $t=0$, we obtain the algebra $D_n$ by Theorem \ref{two}.  If we specialize at $t=1$, 
we obtain the algebra $M_n$ by Theorem \ref{M-pres}.  It is tempting to guess that this ring is free as a module over $\Q[t]$,
which would imply Conjecture \ref{mpy}.  However, computer calculations reveal that this is not the case.  The dimension of
a generic specialization is smaller than those of the specializations at $t=0$ or $t=1$, both of which are equal to $(n-1)!$
\cite[Remarks 2.1 and 2.4]{MPY}.
\end{remark}

\bibliography{./symplectic}
\bibliographystyle{amsalpha}

\end{document}